\numberwithin{equation}{section}
\newtheorem{theorem}{Theorem}[section]
\newtheorem{proposition}[theorem]{Proposition}
\newtheorem{lemma}[theorem]{Lemma}
\theoremstyle{definition}
\theoremstyle{remark}
\newtheorem{remark}[theorem]{Remark}
\newcommand{\ep}{\varepsilon}
\newcommand{\Om}{\Omega}
\newcommand{\de}{\delta}
\begin{document}

\title[  Regularization of point vortices
 ]
{
   Regularization of point vortices  for the Euler equation in dimension two
 }
 \author{Daomin Cao
}

\address{Institute of Applied Mathematics, Chinese Academy of Science, Beijing 100190, P.R. China}

\email{dmcao@amt.ac.cn}

 \author{Zhongyuan Liu
}

\address{Institute of Applied Mathematics, Chinese Academy of Science, Beijing 100190, P.R. China}

\email{liuzy@amss.ac.cn}

\author{Juncheng Wei
}

\address{Department of Mathematics, The Chinese University of Hong Kong, Shatin, N.T., Hong Kong }

\email{wei@math.cuhk.edu.hk}

\begin{abstract}
In this paper, we construct  stationary classical solutions of the
incompressible Euler equation approximating singular stationary
solutions of this equation.
 This procedure is carried out by constructing solutions to the
 following elliptic problem
\[
 \begin{cases}
-\ep^2 \Delta u=\sum_{j=1}^m \chi_{\Om_j}(u-q-\frac{\kappa_j}{2\pi}\ln\frac{1}{\ep})_+^p, \quad & x\in\Omega, \\
u=0, \quad & x\in\partial\Omega,
\end{cases}
\]
where $p>1$,  $\Omega\subset\mathbb{R}^2$ is a bounded domain,
$\Om_i\subset\subset\Om, i=1\cdots,m$ are suitable small domains
such that $\Om_i\bigcap\Om_j $ is empty if $i\neq j$ and $q$ is a
harmonic function.

We showed that if $\Omega$ is simply-connected smooth domain, then
for any given stable critical point of Kirchhoff-Routh function
$\mathcal{W}(z_1,\cdots,z_m)$ with the positive strength
$\kappa_i>0$, there is a
 stationary classical solution approximating stationary $m$ points vortex solution of incompressible Euler
equations with vorticity $\sum_{j=1}^m\kappa_i$.

Existence and asymptotic behavior of single point non-vanishing
vortex solutions were studied by D. Smets and J. Van Schaftingen in
\cite{SV}.
  \\[12pt]
 \emph{AMS 2000 Subject Classifications: Primary $35\mathrm{J}60$;\  Secondary   $35\mathrm{JB}05$;  $35\mathrm{J}40$
 \newline
  Keywords:  The Euler equation;  Multiple non-vanishing
vortices; Free boundary problem.     }
\end{abstract}

\maketitle

\section{Introduction and main results}

The incompressible Euler equations
\begin{equation}\label{1.2}
\begin{cases}
 \mathbf{v}_t+(\mathbf{v}\cdot\nabla)\mathbf{v}=-\nabla P,\\
\nabla\cdot\mathbf{v}=0,
\end{cases}
\end{equation}
describe the evolution of the velocity $\mathbf{v}$ and the pressure
$P$ in an incompressible flow. In $\mathbb{R}^2$, the vorticity of
the flow is defined by $\omega=\nabla\times\mathbf{v}:=\partial_1
v_2-\partial_2 v_1$, which satisfies the equation
\[
\omega_t+\mathbf{v}\cdot\nabla\omega=0.
\]

Suppose that $\omega$ is known, then the velocity $\mathbf{v}$ can
be recovered by Biot-Savart law as following:
\[
\mathbf{v}=\omega\,*\frac{1}{2\pi}\frac{-x^\bot}{|x|^2},
\]
where $x^\bot=(x_2,\,-x_1)$ if $x=(x_1,\,x_2)$. One special singular
solutions of Euler equations is given by
$\omega=\sum^m_{i=1}\kappa_i\delta_{x_i(t)}$, which is related
\[
\mathbf{v}=-\sum^m_{i=1}\frac{\kappa_i}{2\pi}\frac{(x-x_i(t))^\bot}{|x-x_i(t)|^2}.
\]
and the positions of the vortices $x_i: \mathbb{R}\rightarrow
\mathbb{R}^2$ satisfy the following Kirchhoff law:
\[
\kappa_i\,\frac{dx_i}{dt}=(\nabla_{x_i}\mathcal{W})^\bot
\]
where $\mathcal{W}$ is the so called Kirchhoff-Routh function
defined by
\[
\mathcal{W}(x_1,\cdots, x_m)=\frac{1}{2}\sum_{i\neq
j}^m\frac{\kappa_i\kappa_j}{2\pi}\log\frac{1}{|x_i-x_j|}.
\]

In simply-connected bounded domain $\Omega\subset \mathbb{R}^2$,
similar singular solutions also exist. Suppose that the normal
component of $\mathbf{v}$ vanishes on $\partial\Omega$, then the
Kirchhoff-Routh function is

\begin{equation}\label{W1}
\mathcal{W}(x_1,\cdots, x_m)=\frac{1}{2}\sum_{i\neq
j}^m{\kappa_i\kappa_j}G(x_i,\,x_j)+
\frac{1}{2}\sum_{i=1}^m{\kappa_i^2}H(x_i,\,x_i),
\end{equation}
where $G$ is the Green function of $-\Delta$ on $\Omega$ with 0
Dirichlet boundary condition and $H$ is its regular part (the Robin
function).

 Let $v_n$ be the outward component of the velocity $\mathbf{v}$ on
 the boundary $\partial\Omega$, then we see that $\int_{\partial\Omega}v_n=0$ due to
 the fact that $\nabla\cdot\mathbf{v}=0$. Suppose that
 $\mathbf{v}_0$ is the unique harmonic field whose normal component
 on the boundary $\partial\Omega$ is $v_n$, then $\mathbf{v}_0$
 satisfies

\begin{equation}\label{v0}
\begin{cases}
 \nabla\cdot\mathbf{v}_0=0,\,\,\text{in}\,\Omega,\\
\nabla\times\mathbf{v}_0=0,\,\,\text{in}\,\Omega,\\
n\cdot\mathbf{v}_0=v_n,\,\, \text{on}\,\partial\Omega.
\end{cases}
\end{equation}

If $\Omega$ is simply-connected, then $\mathbf{v}_0$ can be written
$\mathbf{v}_0=(\nabla\psi_0)^\bot$, where the stream function
$\psi_0$ is determined up to a constant by
\begin{equation}\label{psi}
\begin{cases}
 -\Delta \psi_0=0,\,\,\text{in}\,\Omega,\\
-\displaystyle\frac{\partial\psi_0}{\partial\tau}=v_n,\,\,
\text{on}\,\partial\Omega,
\end{cases}
\end{equation}
where $\frac{\partial\psi_0}{\partial\tau}$ denotes the tangential
derivative on $\partial\Omega$. The Kirchhoff-Routh function
associated to the vortex dynamics becomes(see Lin \cite{Lin})

\begin{equation}\label{KR}
\mathcal{W}(x_1,\cdots,x_m)=\frac{1}{2}\sum_{i\neq
j}^m\kappa_i\kappa _jG(x_i,x_j)+\frac{1}{2}\sum^{m}_{i=1}\kappa^2
_iH(x_i,x_i)+\sum^{m}_{i=1}\kappa_i\psi_0(x_i).
\end{equation}

 It is known that critical points of the Kirchhoff-Routh function $\mathcal{W}$
 give rise to stationary vortex points solutions of the Euler
 equations. As for the existence of critical points of $\mathcal{W}$ given by \eqref{W1},
 we refer to \cite{BT}.

 Roughly speaking, there are two methods to construct stationary
solutions of the Euler equation, which are the vorticity method and
the stream-function method. The vorticity method was first
established by Arnold and Khesin \cite{AK} and further developed by
Burton \cite{B} and Turkington \cite{T}.

The stream-function method consists in observing that if $\psi$
satisfies  $-\Delta \psi=f(\psi)$ for some function $f\in
C^1(\mathbb{R})$, then $\mathbf{v}=(\nabla\psi)^\bot$ and
$P=F(\psi)-\frac{1 }{2}|\nabla\psi|^2$ is a stationary solution to
the Euler equations, where
$(\nabla\psi)^\bot:=(\frac{\partial\psi}{\partial
x_2},-\frac{\partial\psi}{\partial x_1}), F(t)=\int_0^tf(s)ds$.
Moreover, the velocity $\mathbf{v}$ is irrotational on the set where
$f(\psi)=0$.

Set $q=-\psi_0$ and $u=\psi-\psi_0$, then $u$ satisfies the
following boundary value problem
\begin{equation}\label{1.3}
\begin{cases}
-\Delta u=f(u-q),\quad & x\in\Omega,\\
u=0,\quad &x\in\partial\Omega.
\end{cases}
\end{equation}
 In addition, if we suppose that $\inf_\Omega q>0$ and
$f(t)=0,~t\leq0$, the vorticity set $\{x: f(\psi)>0\}$ is bounded
away from the boundary.

The motivation to study \eqref{1.3} is to justify the weak
formulation for point vortex solutions of the incompressible Euler
equations by approximating these solutions with classical solutions.

 Marchioro and Pulvirenti \cite{MP} have approximated these solutions
 on finite time intervals by considering regularized initial data
 for the vorticity. On the other hand, the stationary point vortex
 solutions can also be approximated by stationary classical
 solutions. See e.g. \cite{BF1,N,SV,T,Y1} and the references therein. It is worth
 pointing out that the above approximations can just give
 explanation for the formulation to single point vortex solutions.
 In this paper, we will show that multi-point vortex solutions can
 be approximated by stationary classical solutions.
There are many results for problem \eqref{1.3} on the existence and
asymptotic behavior of solutions under various assumptions. In
\cite{Ba,BF1,FB,N,T}, the constrained variation methods were used to
find solutions for the equation
\begin{equation}\label{1.4}
\begin{cases}
-\Delta u=\lambda f(u-q),\quad & x\in\Omega,\\
u=0,\quad &x\in\partial\Omega,
\end{cases}
\end{equation}
under the constraint $\int_\Omega F(u-q)=\mu$,  where $\lambda>0$ is
a Lagrange multiplier a priori unknown. On the other hand, in
\cite{AS,AY,Ni,Y1,Y2}, the solutions were obtained by using Mountain
Pass Lemma for various nonlinearities. For the asymptotic behavior,
Berger and Fraenkel \cite{BF1} began studying the asymptotic
behavior for variable $\mu$ and $q$, but the lack of information
about $\lambda$ is still an obstacle.

To avoid this obstacle, Yang \cite{Y1} studied the minimization of
the functional $\frac{1}{2}\int_\Omega|\nabla
u|^2-\frac{1}{\ep^2}\int_\Omega F(u-q)$ under the natural constraint
$\int_\Omega|\nabla u|^2-\frac{1}{\ep^2}\int_\Omega uf(u-q)=0$ and
obtained the asymptotic behavior of the solutions $u_\ep$ as
$\ep\rightarrow0$ for $\Omega=\mathbb{R}^2_+,~q(x)=Wx_1+d$, where
$W,d>0$. That is, set $A_\ep=\{x\in\mathbb{R}^2_+: f(u_\ep-q)>0\},
\kappa_\ep=\frac{1}{\ep^2}\int_\Omega f(u_\ep-q)$ and $x_\ep\in
A_\ep$, then $diam A_\ep\rightarrow0,~
dist(x_\ep,\partial\mathbb{R}_+^2)\rightarrow0$ and
$\frac{u_\ep}{\kappa_\ep}-G(x_\ep,\cdot)\rightarrow0$ in
$W_{loc}^{1,r}(\mathbb{R}_+^2)$ for $r\in[1,2)$. Later on, similar
results were obtained in \cite{LYY} for bounded domains with
additional information that $q(x_\ep)\rightarrow\min_\Omega q$.
However, it has been pointed out in \cite{SV} that the solutions
obtained above corresponded to desingularization of point-vortex
solutions with vanishing vorticity. To get non-vanishing vortex
solutions, D. Smets and J. Van Schaftingen \cite{SV} investigated
the following problem
\begin{equation}\label{1.1}
\begin{cases}
-\ep^2 \Delta u=\left(u-q-\frac{\kappa}{2\pi}\ln\frac{1}{\ep}\right)_+^{p},  & \text{in}\;\Om,\\
u=0, &\text{on}\; \partial\Om,
\end{cases}
\end{equation}
and gave exact asymptotic behavior and expansion  of the least
energy solution by estimating the upper bounds on the energy. The
solutions for \eqref{0} in \cite{SV} were obtained by finding a
minimizer of the corresponding functional in a suitable function
space, which can only give approximation to a single point
non-vanishing vortex. This method is hard  to obtain multiple
non-vanishing solutions.

 In this paper, we approximate stationary vortex solutions of Euler equations \eqref{1.2} with multiple non-vanishing vorticity
 by stationary classical solutions.

 Our main result concerning \eqref{1.2} is the
 following:

\begin{theorem}\label{nth1}

Suppose that $\Omega\subset \mathbb{R}^2$ is a bounded
simply-connected smooth domain. Let $v_n:
\partial\Omega\rightarrow \mathbb{R}$ be such that $v_n\in
L^s(\partial\Omega)$ for some $s>1$ satisfying
$\int_{\partial\Omega}v_n=0$. Let $\kappa_i>0,~i=1,\cdots,m$. Then,
for any given stable critical point $(x_1^*,\cdots,x_m^*)$ of
Kirchhoff-Routh function $\mathcal{W}(x_1,\cdots,x_m)$ defined by
\eqref{KR}, there exists $\ep_0>0$, such that for each $\ep\in
(0,\ep_0)$, problem \eqref{1.2} has a stationary solution
$\mathbf{v}_\ep$ with outward boundary flux given by $v_n$, such
that its vorticities $\omega_\ep$ satisfying
\[
supp (\omega_\ep)\subset\cup_{i=1}^m
B(x_{i,\,\ep},C\ep)~~\text{for}~~x_{i,\,\ep}\in\Omega,
~~i=1,\cdots,m
\]
and as $\ep\rightarrow0$
\[
\int_\Omega \omega_\ep\rightarrow \sum_{i=1}^m\kappa_i,
\]

\[
(x_{1,\,\ep},\cdots,x_{m,\,\ep})\rightarrow (x_1^*,\cdots,x_m^* ).
\]
\end{theorem}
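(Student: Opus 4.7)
The plan is to convert the Euler problem into the semilinear elliptic problem displayed in the abstract and construct its multi-peak solutions by a Lyapunov--Schmidt type reduction, with the Kirchhoff--Routh function $\mathcal{W}$ emerging as the leading term of the reduced functional. Following the standard stream-function procedure sketched in the introduction, one sets $q=-\psi_0$ and $u=\psi-\psi_0$, so that \eqref{1.2} is equivalent to the elliptic system
\begin{equation*}
-\ep^2\Delta u=\sum_{j=1}^m \chi_{\Om_j}\Bigl(u-q-\tfrac{\kappa_j}{2\pi}\ln\tfrac{1}{\ep}\Bigr)_+^p \text{ in }\Om,\qquad u=0\text{ on }\partial\Om,
\end{equation*}
with $\Om_j:=B_\de(x_j^*)$ a small but fixed ball. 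The role of the cutoffs $\chi_{\Om_j}$ is precisely to force exactly one vortex to form inside each $\Om_j$, overcoming the single-vortex limitation of the variational approach in \cite{SV}.

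Next, for each configuration $z=(z_1,\ldots,z_m)$ with $z_j\in B_{\de/2}(x_j^*)$, I would build approximate solutions of the form $U_{\ep,z}(x)=\sum_{j=1}^m V_{j,\ep}(x-z_j)$ from rescaled truncated plasma-type profiles solving a limiting problem $-\Delta W=(W-a_j)_+^p$. The shift $\frac{\kappa_j}{2\pi}\ln\frac{1}{\ep}$ is tuned so that each $V_{j,\ep}$ has vorticity mass tending to $\kappa_j$ and support of diameter $O(\ep)$. Writing the genuine solution as $u=U_{\ep,z}+\phi$, one performs the usual Lyapunov--Schmidt reduction: the projection of the equation onto the orthogonal complement of the approximate kernel (generated by the translations of each $V_{j,\ep}$) is solved for $\phi=\phi_z$ as a small error via a contraction mapping in a suitable weighted norm.

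Critical points of the reduced functional $J_\ep(z):=I_\ep(U_{\ep,z}+\phi_z)$ then correspond to genuine solutions of the PDE. The key expansion to establish is
\begin{equation*}
J_\ep(z)=c_0(\ep)-\mathcal{W}(z_1,\ldots,z_m)+o(1)\quad\text{in }C^1_{\text{loc}}\text{ near }(x_1^*,\ldots,x_m^*),
\end{equation*}
where $c_0(\ep)$ collects the divergent self-energies of the individual vortices and $\mathcal{W}$ is the Kirchhoff--Routh function \eqref{KR}. The hypothesis that $(x_1^*,\ldots,x_m^*)$ is a \emph{stable} critical point of $\mathcal{W}$ (understood as nontrivial local Brouwer degree) then yields a critical point $z_\ep\to(x_1^*,\ldots,x_m^*)$ of $J_\ep$, hence a stationary solution $u_\ep$ of the elliptic problem which unwraps back to a stationary Euler flow $\mathbf{v}_\ep$ with vorticity $\om_\ep=-\ep^2\Delta u_\ep$ supported in $\bigcup_j B(x_{j,\ep},C\ep)$ and total mass tending to $\sum_j\kappa_j$.

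The main obstacle is the energy expansion above. Because $t\mapsto t_+^p$ is not $C^2$, the standard smooth Lyapunov--Schmidt machinery must be adapted, with careful treatment of the moving free boundary $\{u=q+\tfrac{\kappa_j}{2\pi}\ln\tfrac{1}{\ep}\}$; weighted H\"older spaces adapted to the concentration scale $\ep$ appear natural. Moreover, to see $\mathcal{W}$ at the right order one must simultaneously track the self-interaction term producing $H(x_i,x_i)$, the Green function mutual interaction $G(x_i,x_j)$, and the contribution of the harmonic flux $\psi_0(x_i)$, while maintaining uniform $C^1$ control in $z$ so that the passage from a stable critical point of $\mathcal{W}$ to a critical point of $J_\ep$ by degree theory is legitimate. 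Once these ingredients are in place, verifying the support and mass conclusions for $\om_\ep$ is routine from the explicit asymptotics of the profiles $V_{j,\ep}$.
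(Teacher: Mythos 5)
Your proposal follows essentially the same route as the paper: stream-function reformulation of \eqref{1.2} into the cutoff elliptic problem, plasma-type profiles as approximate solutions at each $z_j$, a Lyapunov--Schmidt reduction in the spirit of Cao--Peng--Yan and Dancer--Yan, and a $C^1$ expansion of the reduced functional whose leading $z$-dependent term is (a constant multiple of) the Kirchhoff--Routh function $\mathcal{W}$, so that stability of $(x_1^*,\dots,x_m^*)$ produces a nearby critical point by local degree. The only caveat worth flagging is one of scale: in the paper's expansion the $\mathcal{W}$-contribution enters at order $\de^2/|\ln\ep|^2$ (with parameters $a_{\de,j},s_{\de,j}$ themselves solved for via a Poincar\'e--Miranda argument to make the remainder small enough), so the $C^1$ expansion must be formulated and controlled at that vanishing scale rather than as $J_\ep(z)=c_0(\ep)-\mathcal{W}(z)+o(1)$ with an absolute $o(1)$.
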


\begin{remark} The simplest case, corresponding
 to a single point vortex ( $m=1$ ) was studied by Smets and Van Schaftingen
 \cite{SV} by minimizing the corresponding energy functional. In their paper
 $\mathcal{W}(x_{1,\,\ep})\rightarrow
 sup_{x\in\Omega}\mathcal{W}(x)$. Even in the case $m=1$, our result
 extends theirs to general critical points (with additional assumption that
 the critical point is
 non-degenerate). The method used in \cite{SV} can not be applied to deal with
 general critical point cases. The method used here is constructive and  is completely
 different from theirs.
\end{remark}

\begin{remark}
In this case that $m=1$ suppose that $x_1$ is a strict local
maximum(or minimum) point of Kirchhoff-Routh function
$\mathcal{W}(x)$ defined by \eqref{KR}, statement of Theorem
\ref{nth1} still holds which can be proved similarly(see Remark
\ref{re1.4}). Thus we can obtain corresponding existence result in
\cite{SV}.
\end{remark}

Theorem \ref{nth1} is proved via considering the following problem
\begin{equation}\label{0}
\begin{cases}
-\ep^2 \Delta u=\sum_{j=1}^m \chi_{\Om_j}(u-q-\frac{\kappa_j}{2\pi}\ln\frac{1}{\ep})_+^p, \quad & x\in\Omega, \\
u=0, \quad & x\in\partial\Omega,
\end{cases}
\end{equation}
where $p>1$, $q\in C^2(\Omega)$,  $\Omega\subset\mathbb{R}^2$ is a
bounded domain, $\Om_j\subset\Omega$ is a subdomain such that
$x^*_j\in\Om_j,\,j=1,\cdots,m$ and $\Om_i\cap\Om_j={\O}$ if $ i\neq
j$.

\begin{theorem}\label{th3}

 Suppose $q\in C^2(\Omega)$. For
$\kappa_i>0,~i=1,\cdots,m$. Then, for any given $C^1$-stable
critical point $(x_1^*,\cdots,x_m^*)$ of Kirchhoff-Routh function
$\mathcal{W}(x_1,\cdots,x_m)$ defined by \eqref{KR}, there exists
$\ep_0>0$, such that for each $\ep\in (0,\ep_0)$, \eqref{0} has a
solution $u_\ep$, such that the set  $\Omega_{\ep,i} =\{x:
u_\ep(x)-\frac{\kappa_i}{2\pi}\,\ln\frac{1}{\ep}-q(x)>0\}\subset\subset\Om_i,
\,i=1,2,\cdots,m$ and  each $\Omega_{\ep,\,i}$ shrinks to  $x_i^*\in
\Omega$, as $\ep\to 0$.

\end{theorem}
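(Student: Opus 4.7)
The strategy is a finite-dimensional Lyapunov--Schmidt reduction built around a plasma-type limit profile. For fixed $z=(z_1,\cdots,z_m)$ with each $z_i\in\Om_i$ close to $x_i^*$, I look for a solution of \eqref{0} concentrating at each $z_i$ with vortex support of diameter $O(\ep)$ and total mass (after dividing by $\ep^2$) equal to $\kappa_i$. The building block at $z_i$ is constructed from the radial, compactly supported solution $W$ of the plasma problem $-\Delta W=W_+^{p}$ in $\mathbb{R}^2$ (extended harmonically outside its positivity set): one rescales $W$ by a factor $s_\ep\to 0$ chosen so that the mass constraint
\[
\frac{1}{\ep^{2}}\int_{\Om_i}\bigl(U_{\ep,z_i}-q-\tfrac{\kappa_i}{2\pi}\ln\tfrac{1}{\ep}\bigr)_+^{p}\,dx=\kappa_i\bigl(1+o(1)\bigr)
\]
holds, translates to $z_i$, shifts by $q+\frac{\kappa_i}{2\pi}\ln\frac{1}{\ep}$, and then adds a harmonic correction so that the Dirichlet boundary condition on $\partial\Om$ is met. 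This harmonic correction is exactly what brings in the Green and Robin functions. The final approximate solution is $V_{\ep,z}=\sum_{i=1}^{m}U_{\ep,z_i}$.

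Next I would write $u=V_{\ep,z}+\phi$ and carry out the reduction. Linearizing \eqref{0} at $V_{\ep,z}$ produces an operator $L_\ep$ whose approximate kernel is $2m$-dimensional, spanned by the translational derivatives $\partial_{z_i^{(k)}}U_{\ep,z_i}$, $k=1,2$, as is standard for solutions built from nondegenerate bubbles. Using the classical nondegeneracy of the radial plasma ground state $W$ (its linearization has only the translation kernel), I invert $L_\ep$ on the $L^{2}$-orthogonal complement of this kernel to obtain a unique small correction $\phi=\phi_\ep(z)$, continuously differentiable in $z$, uniformly for $z$ in a fixed neighborhood $\mathcal{N}$ of $(x_1^*,\cdots,x_m^*)$. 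The indicator factors $\chi_{\Om_j}$ cause no trouble: an a posteriori pointwise analysis shows that the vortex set $\Om_{\ep,i}=\{u_\ep-q-\tfrac{\kappa_i}{2\pi}\ln\tfrac{1}{\ep}>0\}$ sits in a ball of radius $O(\ep)$ about $z_i$, which lies well inside $\Om_i$ once $\ep$ is small.

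What remains is the finite dimensional problem: equation \eqref{0} is satisfied if and only if the $2m$ Lagrange multipliers of the reduction vanish, equivalently if and only if $z$ is a critical point of
\[
F_\ep(z)=I_\ep\bigl(V_{\ep,z}+\phi_\ep(z)\bigr),
\]
where $I_\ep$ is the energy naturally associated with \eqref{0}. A delicate asymptotic expansion should yield
\[
F_\ep(z)=A(\ep)-\mathcal{W}(z_1,\cdots,z_m)+o(1)\qquad\text{in }C^{1}(\mathcal{N}),
\]
with $A(\ep)$ depending only on $\ep$ and the $\kappa_i$, and $\mathcal{W}$ the Kirchhoff--Routh function of \eqref{KR}. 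The $C^1$-stability of $(x_1^*,\cdots,x_m^*)$ then gives, via a Brouwer-degree or continuation argument, a critical point $z_\ep^*\in\mathcal{N}$ of $F_\ep$ converging to $(x_1^*,\cdots,x_m^*)$ as $\ep\to 0$. The claims $\Om_{\ep,i}\subset\subset\Om_i$ and $\Om_{\ep,i}\to\{x_i^*\}$ then follow immediately from the explicit support of $V_{\ep,z_\ep^*}$ and the smallness of $\phi_\ep$.

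The hard part is the $C^1$ energy expansion displayed above. One must separate the self-energy of each bubble (which, once $s_\ep$ is calibrated, is constant in $z_i$), the pairwise Green interactions $\tfrac12\sum_{i\ne j}\kappa_i\kappa_j G(z_i,z_j)$, the Robin self-interactions $\tfrac12\sum_i\kappa_i^{2}H(z_i,z_i)$, and the contribution $\sum_i\kappa_i\psi_0(z_i)$ coming from the harmonic part $q=-\psi_0$, and then control every remainder together with its $z$-derivatives uniformly in $z\in\mathcal{N}$. This bookkeeping, together with the nondegeneracy of the linearized plasma operator, is what converts the $C^1$-stability hypothesis on $\mathcal{W}$ into existence of $z_\ep^*$ and hence of the desired solution $u_\ep$ of \eqref{0}.
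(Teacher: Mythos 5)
Your proposal is essentially the same Lyapunov--Schmidt reduction the paper carries out: scaled plasma profiles projected onto $H_0^1(\Omega)$, a contraction on the orthogonal complement of the translation kernel, and a $C^1$ energy expansion whose leading $z$-dependent term is (a constant multiple of) the Kirchhoff--Routh function, with $C^1$-stability closing the argument. The one technical step you gloss over—choosing simultaneously the scaling $s_{\de,i}$ and the cut level $a_{\de,i}$ so that the mass and interaction corrections match—is what the paper's Lemma~\ref{l2.1} handles via the Poincar\'e--Miranda theorem, but your calibration remark points in the right direction.
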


\begin{remark}\label{re1.4}
For the case $m=1$ suppose that $x_1$ is a strict local maximum(or
minimum) point of Kirchhoff-Routh function $\mathcal{W}(x)$ defined
by \eqref{KR}, statement of Theorem \ref{th3} still holds which can
be proved by making corresponding modification of the proof of
Theorem \ref{th3} in obtaining critical point of $K(z)$ defined by
\eqref{K}(see Propositions 2.3,2.5 and 2.6 \cite{Cao} for detailed
arguments).
\end{remark}

For domains which may not be simply-connected, we show in the
following result that the topology of the domain plays an important
role in the existence of solutions.

\begin{theorem}\label{th1}

Suppose that the homology of  $\Om$  is nontrivial. Then, for any
positive integer $m$, there exists $\ep_0>0$, such that for each
$\ep\in (0,\ep_0)$, \eqref{0} has a solution $u_\ep$, such that the
set  $\Omega_{\ep,i} =\{x:
u_\ep(x)-\frac{\kappa_i}{2\pi}\,\ln\frac{1}{\ep}-q(x)>0\}\subset\subset\Om_i,
\,i=1,2,\cdots,m$ and  each $\Omega_{\ep,\,i}$ shrinks to a point
$x_i^*\in \Omega$, as $\ep\to 0$. Moreover  $x_i^*\neq x_j^*$,  if $
i\neq j$.

\end{theorem}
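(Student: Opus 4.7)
The proof will follow the same two-step strategy as Theorem \ref{th3}: (i) perform a Lyapunov--Schmidt reduction to finite dimensions, and (ii) extract critical points of the reduced functional. The novelty is in step (ii), where we will trade the hypothesis of a stable critical point of $\mathcal{W}$ for a minimax construction powered by the nontrivial topology of $\Om$.

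\emph{Step 1 (Reduction).} The Lyapunov--Schmidt machinery developed to prove Theorem \ref{th3} applies verbatim and produces, for each small $\ep$, a smooth reduced functional $K_\ep$ defined on the open configuration space
\[
\mathcal{C} \;=\; \bigl\{(z_1,\ldots,z_m) \in \Om^m : z_i \neq z_j \text{ for } i \neq j\bigr\},
\]
whose critical points correspond to genuine solutions $u_\ep = U_{\ep,z} + \phi_{\ep,z}$ of \eqref{0}. The leading-order energy expansion takes the form
\[
K_\ep(z) \;=\; A(\ep) + B(\ep)\,\mathcal{W}(z_1,\ldots,z_m) + o\bigl(B(\ep)\bigr), \qquad B(\ep) > 0,
\]
uniformly on compact subsets of $\mathcal{C}$. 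Because each $\kappa_j > 0$, the Robin contribution $\tfrac{1}{2}\kappa_j^2 H(z_j,z_j)$ drives $\mathcal{W}$ to $-\infty$ as any $z_j \to \partial \Om$, while the cross term $\kappa_i \kappa_j G(z_i,z_j)$ drives $\mathcal{W}$ to $+\infty$ as any two coordinates collide. After normalization the same holds for $K_\ep$.

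\emph{Step 2 (Minimax via the nontrivial homology of $\Om$).} Because $H_*(\Om)$ is nontrivial (equivalently, $\Om$ is not simply connected), there is a noncontractible simple closed curve $\gamma \subset \Om$. I would choose $m$ pairwise disjoint, compactly contained parallel copies $\gamma_1,\ldots,\gamma_m$, each homotopic to $\gamma$, and form the product map
\[
\Phi : (S^1)^m \longrightarrow \mathcal{C}, \qquad (\theta_1,\ldots,\theta_m) \longmapsto \bigl(\gamma_1(\theta_1),\ldots,\gamma_m(\theta_m)\bigr).
\]
The class $[\Phi]$ is nontrivial in $H_m(\mathcal{C})$, a statement about the ordered configuration space of $\Om$ that follows from the Fadell--Neuwirth fibration. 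Define the minimax value
\[
c_\ep \;=\; \inf_{\sigma \sim \Phi}\ \max_{\theta \in (S^1)^m} K_\ep(\sigma(\theta)),
\]
the infimum being taken over continuous maps $(S^1)^m \to \mathcal{C}$ representing the same class as $\Phi$. The blow-up behavior of $K_\ep$ on $\partial \mathcal{C}$ prevents any competitor $\sigma$ from lowering its maximum by escaping either to $\partial \Om$ or to the diagonal, so min-max sequences are confined to a compact subset of $\mathcal{C}$. A standard deformation lemma then produces a critical point $z_\ep^*$ of $K_\ep$ at level $c_\ep$, which via the reduction yields a solution $u_\ep$ of \eqref{0} whose vortex cores shrink to the $m$ pairwise distinct limit points $z_\ep^* \to (x_1^*,\ldots,x_m^*)$.

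\emph{Main obstacle.} The analytic ingredients, namely the reduction, the uniform energy expansion, and Palais--Smale-type compactness on compact subsets of $\mathcal{C}$, are already in place from Theorem \ref{th3}. The substantive step is topological: one must verify that $[\Phi]$ is nontrivial in a (co)homology theory robust under the admissible deformations and, equivalently, that $\Phi$ cannot be unlinked from the forbidden set $\{z_i = z_j\} \cup \{z_j \in \partial\Om\}$ without crossing a high-$K_\ep$ region. For $\Om$ homotopy equivalent to $S^1$ (the annular case) this is immediate from Fadell--Neuwirth; for domains with richer homology one may find it technically cleaner to replace the torus class by a Ljusternik--Schnirelmann category or cup-length argument adapted to the ordered configuration space, which still delivers the required interior critical point.
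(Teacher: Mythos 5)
Your Step 1 (the Lyapunov--Schmidt reduction) is indeed the same machinery the paper uses. The problem lies in Step 2, and it begins with a sign error that is not cosmetic.

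\textbf{The sign of the leading coefficient is wrong.} From the paper's expansion (see the proof of Theorem \ref{th3} and Remark \ref{r44}), the reduced functional satisfies
\[
K(Z)\;=\;\text{const}(\ep)\;-\;\frac{4\pi^2\de^2}{|\ln\ep|^2}\,\mathcal{W}(Z)\;+\;\text{lower order},
\]
so the coefficient multiplying $\mathcal{W}$ is \emph{negative}, i.e.\ $B(\ep)<0$ in your notation. Since $H(x,x)\to-\infty$ as $x\to\partial\Om$ and $G(x_i,x_j)\to+\infty$ as $x_i\to x_j$, you correctly record $\mathcal{W}\to-\infty$ near the boundary and $\mathcal{W}\to+\infty$ on the diagonal, but then the conclusion for $K$ is exactly the opposite of what you state: $K\to+\infty$ as any $z_j\to\partial\Om$ and $K\to-\infty$ as $z_i\to z_j$. (This is visible directly in Lemma \ref{l44}: the normal derivative of $K$ is positive on $\partial\Om_\varrho$, and $K\le c_{\de,1}$ on the near-collision set.)

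\textbf{Consequently the proposed inf-max has no a priori structure.} Your competitors are maps $\sigma:(S^1)^m\to\mathcal{C}$ in the homology class of $\Phi$, and you need the barrier at $\partial\mathcal{C}$ to force the max up. With the correct signs, the diagonal is not a barrier but a sink: pushing the image of $\sigma$ toward the fat diagonal drives $K$ to $-\infty$, so $\max_\theta K(\sigma(\theta))$ is potentially unbounded below along the minimizing sequence. To rescue the inf-max you would have to prove separately that the class $[\Phi]\in H_m(\mathcal{C})$ cannot be represented by a cycle contained in a tubular neighborhood of the fat diagonal (a nontrivial statement about the configuration space, not an immediate consequence of the Fadell--Neuwirth fibration), and then show this forces a uniform lower bound on the minimax level. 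None of this is addressed; your argument, as written, relies on a barrier at the diagonal that does not exist. The same sign issue also means you cannot conclude that the limit points $x_i^*$ are pairwise distinct, which is part of the statement of Theorem \ref{th1}.

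\textbf{The paper's argument is structurally different.} It is a deformation/retraction argument, not a minimax. One fixes two levels $c_{\de,1}<c_{\de,2}$ and shows: (i) the negative gradient flow keeps the compact region $D$ (points bounded away from $\partial\Om$ and from the diagonal) invariant, because $K$ increases toward $\partial\Om_\varrho$ and is already below $c_{\de,1}$ near collisions (Lemma \ref{l44}); (ii) if there is no critical value in $(c_{\de,1},c_{\de,2})$, the flow deformation retracts $K^{c_{\de,2}}=D$ onto $K^{c_{\de,1}}$; (iii) $K^{c_{\de,1}}$ is wedged between two near-collision sets, so after ``filling the hole'' it becomes a neighborhood of the fat diagonal; (iv) the fat diagonal cannot be a deformation retract of $\Om_\varrho^m$ when $\Om$ has nontrivial homology. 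This produces a critical point in $K^{c_{\de,2}}\setminus K^{c_{\de,1}}$, which in particular is bounded away from the fat diagonal, giving $x_i^*\ne x_j^*$ for free. No explicit torus cycle and no Fadell--Neuwirth appear. Your Step 2 would need to be rebuilt along these lines, or the minimax would need the additional topological nontriviality claim (torus class not supported near the diagonal) made precise and proved, together with a Palais--Smale argument adapted to the correct sign of the asymptotics.
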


\begin{remark}
Since $m$ is arbitrary, from Theorem \ref{th1}, we can see that the
number of solutions for  \eqref{1} is unbounded  as $\de\to 0$.
\end{remark}

Not as in \cite{SV} where \eqref{0} is investigated directly,
 we prove Theorem \ref{nth1}, Theorem \ref{th3} and Theorem
 \ref{th1} by considering an equivalent problem of \eqref{0}
 instead. Let $ w=\frac{2\pi}{|\ln\ep|}u$ and
$\delta=\ep(\frac{2\pi}{ |\ln\ep |})^{\frac{p-1}{2}}$, then
\eqref{0} becomes
\begin{equation}\label{1}
\begin{cases}
-\de^2 \Delta w=\sum_{j=1}^m\chi_{\Om_j}\left(w-\kappa_j-\frac{2\pi}{ |\ln\ep|}q(x)\right)_+^{p},  & \text{in}\;\Om,\\
w=0, &\text{on}\; \partial\Om.
\end{cases}
\end{equation}

We will use a reduction argument to prove Theorem~\ref{th3} and
Theorem~\ref{th1}. To
 this end, we need to construct an approximate solution for
\eqref{1}.  For the problem studied in this paper, the corresponding
``limit" problem in $\mathbb{R}^2$ has no bounded nontrivial
solution. So, we will follow the method in \cite{CPY,DY} to
construct an approximate solution. Since there are two parameters
$\de,~\ep$ in problem \eqref{1}, which causes some difficulty, we
must take this influence into careful consideration and give
delicate estimates in order to perform the reduction argument. For
example we need to consider $(s_{1,\de},\cdots,s_{m,\de})$ and
$(a_{1,\de},\cdots,a_{m,\de})$ together in Lemma~\ref{l2.1}.

As a final remark, we point out that problem \eqref{1} can  be
considered as a free boundary problem. Similar problems have been
studied extensively. The reader can refer to \cite{CF,CPY,DY,FW,LP}
for more results on this kind of problem.

This paper is organized as follows.  In section~2, we construct the
approximate solution for \eqref{1}. We will carry out a reduction
argument in section~3 and the main results will be proved in
section~4.  We put some basic estimates in the appendix.

\section{Approximate solutions }

In the section,  we will construct approximate solutions for
\eqref{1}.

Let $R>0$ be a large constant, such that for any $x\in \Om$,
$\Om\subset\subset B_R(x)$. Consider the following problem:

\begin{equation}\label{2.1}
\begin{cases}
-\delta^2\Delta w=( w-a)_+^{p},& \text{in}\; B_R(0),\\
w=0, &\text{on}\;\partial B_R(0),
\end{cases}
\end{equation}
where $a>0$ is a constant. Then, \eqref{2.1} has a unique solution
$W_{\de,a}$, which can be written as

\begin{equation}\label{2.2}
W_{\de,a}(x)=
\begin{cases}
a+\de^{2/(p-1)}s_\de^{-2/(p-1)}\phi\bigl(\frac{|x|}{s_\de}\bigr), &  |x|\le s_\de,\\
a\ln\frac {|x|} R/\ln \frac {s_\de}R, & s_\de\le |x|\le R,
\end{cases}
\end{equation}
where $\phi(x)=\phi(|x|)$ is the unique solution of
\begin{equation*}
-\Delta \phi=\phi^{p},\quad\phi>0,~~\phi\in H_0^1\bigl(B_1(0)\bigr)
\end{equation*}
and $s_\de\in (0,R)$ satisfies
\[
\de^{2/(p-1)}s_\de^{-2/(p-1)}\phi^\prime(1)=\frac{a}{\ln(s_\de/R)},
\]
which implies
\[
\frac{s_\de}{\de|\ln\de|^{(p-1)/2}}\rightarrow\left(\frac{|\phi^\prime(1)|}{a}\right)^{(p-1)/2}>0,\quad\text{as}~~\de\rightarrow0.
\]
Moreover, by Pohozaev identity, we can get that
\[
\int_{B_1(0)}\phi^{p+1}=\frac{\pi(p+1)
}{2}|\phi^\prime(1)|^2~~\text{and}~~\int_{B_1(0)}\phi^{p}=2\pi|\phi^\prime(1)|.
\]

 For any $z \in \Om$,  define
$W_{\de,z,a}(x)=W_{\de,a}(x-z)$. Because $W_{\de,z,a}$
 does not vanish on $\partial\Omega$, we need to  make a  projection. Let
$PW_{\de,z,a}$ be the solution of

\[
\begin{cases}
-\de^2 \Delta w=( W_{\de,z,a}-a)_+^{p},& \text{in } \; \Om,\\
w=0, &\text{on}\; \partial \Om.
\end{cases}
\]
Then

\begin{equation}\label{2.3}
PW_{\de,z,a}= W_{\de,z,a}-\frac a{\ln \frac{R}{s_\de}} g(x,z),
\end{equation}
where $g(x,z)$ satisfies

\[
\begin{cases}
- \Delta g=0,& \text{in } \; \Om,\\
g=\ln\frac{R}{|x-z|}, &\text{on}\; \partial \Om.
\end{cases}
\]
It is easy to see that

\[
g(x,z)=\ln R +2\pi h(x,z),
\]
where $h(x,z)=-H(x,z)$.

 We will
construct solutions for \eqref{1} of  the form

\[
\sum_{j=1}^m PW_{\de,z_{j},a_{\de,j}} +\omega_\de,
\]
where $z_j\in\Omega$ for $j=1,\cdots,m$, $\omega_\de$ is a
perturbation term. To obtain a good estimate for $\omega_\de$, we
need to choose $a_{\de,j}$ properly.

By \eqref{2.3}, we have

\begin{equation}\label{2.4}
\begin{split}
&-\de^2 \Delta  \sum_{j=1}^m PW_{\de,z_j,a_{\de,j}}
-\sum_{j=1}^m\chi_{\Om_j}\left(\sum_{i=1}^mPW_{\de,z_i,a_{\de,i}}-\kappa_j-\frac{2\pi q}{|\ln\ep|}\right)_+^{p}\\
=&  \sum_{j=1}^m \left(
W_{\de,z_j,a_{\de,j}}-a_{\de,j}\right)_+^{p}-
\sum_{j=1}^m\chi_{\Om_j}\left(\sum_{i=1}^m
\left(W_{\de,z_i,a_{\de,i}}-\frac{a_{\de,i}}{\ln\frac R{s_\de} }
g(y,z_i)\right) -\kappa_j-\frac{2\pi q}{|\ln\ep|}\right)_+^{p}.
\end{split}
\end{equation}

Denote $Z=(z_1,\cdots,z_m)\in \mathbb{R}^{2m}$. In this paper, we
always assume that $z_j\in\Om$ satisfies

\begin{equation}\label{2.5}
d(z_j,\partial\Om)\ge \varrho>0,\quad |z_i-z_j|\ge \varrho^{\bar
L},\quad i, j=1,\cdots,m,\; i\ne j,
\end{equation}
where $\varrho>0$ is a fixed small constant and $\bar L>0$ is a
fixed large constant.

\begin{lemma}\label{l2.1}
For $\de>0$ small,  there exist $(s_{\de,1}(Z),\cdots,s_{\de,m}(Z))$
and $(a_{\de,1}(Z),\cdots,a_{\de,m}(Z))$ satisfying the following
system
\begin{equation}\label{2.6}
\de^{2/(p-1)}s_i^{-2/(p-1)}\phi^\prime(1)=\frac{a_{i}}{\ln(s_i/R)},\qquad
i=1,\cdots,m
\end{equation}
and
\begin{equation}\label{2.7}
a_{i}= \kappa_i + \frac{2\pi
q(z_i)}{|\ln\ep|}+\frac{g(z_i,z_i)}{\ln\frac R {s_{i}} } a_{i}
-\sum_{j\ne i}^m \frac{\bar G(z_i,z_j)}{\ln\frac R{s_{j}} }a_{j}
,\qquad i=1,\cdots, m,
\end{equation}
where for $i=1,\cdots,m$,
$$
\bar G(x,z_i)=\ln \frac R{|x-z_i|} -g(x,z_i).
$$
\end{lemma}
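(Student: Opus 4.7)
\medskip

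The plan is to decouple the system \eqref{2.6}--\eqref{2.7}: first solve \eqref{2.6} for $s_i$ as a function of $a_i$, then substitute into \eqref{2.7} to reduce everything to a fixed-point problem in $(a_1,\ldots,a_m)\in\mathbb{R}^m$. Because the coefficient $1/\ln(R/s_j)$ in front of every coupling term is $O(1/|\ln\de|)$, equation \eqref{2.7} is a small perturbation of the trivial relation $a_i=\kappa_i$, which can be inverted by a contraction argument.

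\emph{Step 1: solving \eqref{2.6}.} Using $\phi'(1)<0$ and $s<R$, rewrite \eqref{2.6} as
\[
\frac{s^{2/(p-1)}}{\ln(R/s)} \,=\, \frac{\de^{2/(p-1)}|\phi'(1)|}{a}.
\]
The left-hand side is smooth and strictly increasing for $s$ small, with range $(0,\infty)$, so for every $a$ in a fixed compact subset of $(0,\infty)$ and all small $\de$ there is a unique $s=s(\de,a)\in(0,R)$, depending smoothly on $a$. Direct manipulation of the defining identity gives $s(\de,a)\to 0$ and $\ln(R/s(\de,a))=|\ln\de|(1+o(1))$ uniformly in $a$, together with $\partial_a s = O(s/(a|\ln\de|))$.

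\emph{Step 2: contraction for \eqref{2.7}.} Substituting $s_i=s(\de,a_i)$ into \eqref{2.7} defines a map $T:\mathbb{R}^m\to\mathbb{R}^m$ whose $i$-th component is the right-hand side of \eqref{2.7}. On the closed ball $B=\{\vec a:|a_i-\kappa_i|\le\eta,\ \forall i\}$ for a fixed small $\eta>0$, the quantities $g(z_i,z_i)$ and $\bar G(z_i,z_j)$ (for $i\ne j$) are uniformly bounded under \eqref{2.5}, so every non-$\kappa_i$ summand on the RHS has size $O(1/|\ln\de|)+O(1/|\ln\ep|)$. Combining this with the $a$-regularity from Step 1, $T$ maps $B$ into itself and has Lipschitz constant $O(1/|\ln\de|)<1$ for $\de$ small. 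Banach's fixed-point theorem produces the unique $(a_{\de,1}(Z),\ldots,a_{\de,m}(Z))\in B$, and $s_{\de,i}(Z):=s(\de,a_{\de,i}(Z))$ completes the construction.

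\emph{Main obstacle.} The delicate point is making all of this \emph{uniform} in $Z$ over the region defined by \eqref{2.5}. The worst case is $|z_i-z_j|\sim\varrho^{\bar L}$, which can push $\bar G(z_i,z_j)$ up to $C\bar L|\ln\varrho|$; but since $\varrho$ and $\bar L$ are fixed constants independent of $\de$, this remains dominated by $\ln(R/s_j)\sim|\ln\de|$, so the contraction survives. The same smoothness yields $C^1$ dependence of $(a_{\de,i},s_{\de,i})$ on $Z$, which is what the subsequent reduction argument in Section~3 will need.
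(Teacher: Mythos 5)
Your proposal is correct and genuinely reverses the paper's order of attack. The paper first treats \eqref{2.7} as a near-identity linear system in $(a_1,\ldots,a_m)$ with $(s_1,\ldots,s_m)$ frozen, obtaining $a_i=a_i(s_1,\ldots,s_m)\in[\kappa_i/2,3\kappa_i/2]$, and then applies the Poincar\'e--Miranda theorem to $\theta_i(s_1,\ldots,s_m)=s_i^{2/(p-1)}/\ln(R/s_i)+\phi'(1)\de^{2/(p-1)}/a_i$ on the box $[\de/|\ln\de|,\de|\ln\de|]^m$, using only the sign change of $\theta_i$ across the $i$-th pair of opposite faces. You instead invert \eqref{2.6} for $s_i=s(\de,a_i)$, which is indeed unique because $s\mapsto s^{2/(p-1)}/\ln(R/s)$ is strictly increasing on all of $(0,R)$ with range $(0,\infty)$, and close by Banach contraction in $\vec a$. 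Your route buys local uniqueness and $C^1$ dependence on $(Z,a)$ for free, which Poincar\'e--Miranda does not provide (the paper never needs uniqueness here, but your smoothness statement matches what \eqref{2.10} later requires). One inaccuracy in your Step~1: implicit differentiation of $f(s):=s^{2/(p-1)}/\ln(R/s)=\de^{2/(p-1)}|\phi'(1)|/a$ gives $\partial_a s=-f(s)/(af'(s))=-\frac{s}{a}\cdot\frac{\ln(R/s)}{\frac{2}{p-1}\ln(R/s)+1}=O(s/a)$, not $O\bigl(s/(a|\ln\de|)\bigr)$ as you claim. This does not damage the argument: the quantity your contraction estimate actually needs is $\partial_{a_i}\bigl(a_i/\ln(R/s_i)\bigr)=\bigl(\ln(R/s_i)\bigr)^{-1}+O\bigl((\ln(R/s_i))^{-2}\bigr)=O(1/|\ln\de|)$, and the weaker correct bound $\partial_a s=O(s/a)$ already yields that, so the Lipschitz constant $O(1/|\ln\de|)$ survives.
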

\begin{proof}
We will show that system \eqref{2.6}-\eqref{2.7} has a solution
$(s_1,\cdots,s_m, a_1,\cdots,a_m)$ in
$\mathcal{D}\doteq[\frac{\de}{|\ln\de|},\de |\ln\de|\,]^m\times
\Pi_{i=1}^m[\frac{\kappa_i}{2},\,\frac{3\kappa_i}{2}]$. It is easy
to see, for fixed $(s_1,\cdots,s_m)$ with $0<\de<\de^*$ small, that
\eqref{2.7} has a solution $(a_1,\cdots,a_m)$ depending on
$(s_1,\cdots,s_m)$, such that $\frac{\kappa_i}{2}\leq
a_i\leq\frac{3\kappa_i}{2}$. For such $(a_1,\cdots,a_m)$ and for
$i=1,\cdots, m$ define
\[
\theta_i(s_1,\cdots,s_m)=\frac{s_i^\frac{2}{p-1}}{\ln\frac{R}{s_i}}+\frac{\phi^\prime(1)}{a_i}\de^\frac{2}{p-1},
\]
then it is easy to verify that
\[
\left\{
\begin{array}{ll}
\theta_i(s_1,\cdots,s_m)>0,& s_i=\de |\ln\de |, s_j\in [\frac{\de}{|\ln\de |},\,\,\de |\ln\de |\,] \,\,{\rm for}\,\, j=1,\cdots,m, j\neq i, \\
\theta_i(s_1,\cdots,s_m)<0,& s_i=\frac{\de}{|\ln\de |},\,\,s_j\in
[\frac{\de}{|\ln\de |},\,\,\de |\ln\de |\,] \,\,\,\,\,{\rm for}\,\,
j=1,\cdots,m, j\neq i.
\end{array}
\right.
\]

By the Poincar\'{e}-Miranda Theorem in \cite{K,M}, we can get
$(s_{\de,1},\cdots,s_{\de,m})$ such that
$\theta_i(s_{\de,1},\cdots,s_{\de, m})=0$. Therefore we have
completed our proof of Lemma ~\ref{l2.1}.

\end{proof}
For simplicity, for given $Z=(z_1,\cdots,z_m)$, in this paper, we
will use $a_{\de,i}$,$s_{\de,i}$ instead of
$a_{\de,i}(Z)$,$s_{\de,i}(Z)$.
\begin{remark}\label{remark2.2}
More precisely, we have the following relation
\begin{equation}\label{r2.2.1}
\frac{1}{\ln\frac{R}{s_{\de,i}}}=\frac{1}{\ln\frac{R}{\ep}}
+O\left(\frac{\ln|\ln\ep|}{|\ln\ep|^2}\right),\, i=1,\cdots,m,
\end{equation}

\begin{equation}\label{r2.2.2}
a_{\de,i}=\kappa_i+\frac{2\pi
q(z_i)}{|\ln\ep|}+\frac{g(z_i,z_i)}{\ln\frac{R}{\ep}}-\sum_{j\neq
i}^m\frac {\bar G(z_i,z_j)}{\ln\frac{R}{\ep}}+
O\Bigl(\frac{\ln|\ln\ep|}{|\ln\ep|^2}\Bigr),\, i=1,\cdots,m,
\end{equation}

\begin{equation}\label{2.10}
\frac{\partial a_{\de,i}}{\partial
z_{j,h}}=O\left(\frac1{|\ln\ep|}\right),\quad~~\frac{\partial
s_{\de,i}}{\partial
z_{j,h}}=O\left(\frac{\ep}{|\ln\ep|}\right),\,i,j=1,\cdots,m,\,h=1,2.
\end{equation}

Indeed, \eqref{r2.2.1} can be deduced from \eqref{2.6}(see
\cite{DY}, for example). \eqref{r2.2.2} can been deduced from
\eqref{r2.2.1} and \eqref{2.7}. Differentiating both sides of
\eqref{2.6} and \eqref{2.7} with respect to $z_{j,\,h}$ we can get a
linear system of $\frac{\partial a_{\de,i}}{\partial z_{j,h}}$ and
$\frac{\partial s_{\de,i}}{\partial z_{j,h}}$, which will deduces
\eqref{2.10}.
\end{remark}

 From now on we will always choose
$(a_{\de,1},\cdots,a_{\de,m})$ and $(s_{\de,1},\cdots,s_{\de,m})$
such that \eqref{2.6} and \eqref{2.7} are satisfied. For
$(a_{\de,1},\cdots,a_{\de,m})$ and $(s_{\de,1},\cdots,s_{\de,m})$
chosen in such a way define
\begin{equation}\label{2.8}
P_{\de, Z,j}=
PW_{\de,z_j,a_{\de,j}},\,\,P_{\de,Z}=\sum_{j=1}^mP_{\de, Z,j}.
\end{equation}

 Then,  we find that for $x\in
B_{L s_{\de,i}}(z_i)$, where $L>0$ is any fixed constant,

\[
\begin{split}
& P_{\de, Z,i}(x)-\kappa_i-\frac{2\pi q(x)}{|\ln\ep|}= W_{\de,z_i,
a_{\de,i}}(x) -\frac {a_{\de,i}}{\ln \frac{R}{s_{\de,i} }}
g(x,z_i)-\kappa_i-\frac{2\pi q(x)}{|\ln\ep|}
\\
=&  W_{\de,z_i, a_{\de,i}}(x)- \kappa_i-\frac {a_{\de,i}}{\ln
\frac{R}{s_{ \de,i}}}
 g(z_i,z_i)-\frac {a_{\de,i}}{\ln \frac{R}{s_{\de,i}}}\Bigl(
 \left\langle D g(z_i,z_i), x-z_i\right\rangle +O( |x-z_i|^2)\Bigr)\\
\quad&-\frac{2\pi q(z_i)}{|\ln\ep|}-\frac{2\pi}{|\ln\ep|}\left(\left\langle Dq(z_i),x-z_i\right\rangle+O(|x-z_i|^2)\right)\\
 =&  W_{\de,z_i, a_{\de,i}}(x)- \kappa_i-\frac{2\pi q(z_i)}{|\ln\ep|}-\frac{2\pi}{|\ln\ep|}\left\langle
Dq(z_i),x-z_i\right\rangle\\
\quad&-\frac {a_{\de,i}}{\ln \frac{R}{s_{ \de,i}}}
 g(z_i,z_i)-\frac {a_{\de,i}}{\ln \frac{R}{s_{\de,i}}}
 \left\langle D g(z_i,z_i), x-z_i\right\rangle +O\left(\frac{s_{\de,i}^2}{|\ln\ep|}\right),
\end{split}
\]
and for $j\ne i$ and  $x\in B_{Ls_{\de,i}}(z_i)$, by \eqref{2.2}

\[
\begin{split}
&  P_{\de, Z,j}(x)=W_{\de,
z_j,a_{\de,j}}(x)-\frac{a_{\de,j}}{\ln\frac R{s_{\de,j}} }
  g(x,z_j)=
 \frac{a_{\de,j}}{\ln\frac R{s_{\de,j}}
}   \bar G(x,z_j)\\
=&
  \frac{a_{\de,j}}{\ln\frac R{s_{\de,j}}
}     \bar  G(z_i,z_j)+\frac{a_{\de,j}}{\ln\frac R{s_{\de,j}} }
\left\langle D  \bar G(z_i,z_j),x-z_i\right\rangle +
 O\Bigl( \frac{s_{\de,i}^2 }{|\ln\ep|
} \Bigr).
\end{split}
\]
So,  by using \eqref{2.7}, we obtain

\begin{equation}\label{2.9}
\begin{split}
&  P_{\de, Z}(x)-\kappa_i-\frac{2\pi q(x)}{|\ln\ep|}\\
=&
 W_{\de,z_i, a_{\de,i}}(x)- a_{\de,i} -\frac{2\pi}{|\ln\ep|}\left\langle Dq(z_i),x-z_i\right\rangle-\frac {a_{\de,i}}{\ln \frac{R}{s_{\de,i}}}
 \left\langle D g(z_i,z_i), x-z_i\right\rangle \\
& +\sum_{j\ne i}^m \frac{a_{\de,j} }{\ln\frac R{s_{\de,j}} }
\left\langle D \bar G(z_i,z_j),x-z_i\right\rangle
  +O\left( \frac{s_{\de,i}^2 }{|\ln\ep| } \right),\quad x\in B_{L
s_{\de,i}}(z_i).
\end{split}
\end{equation}

We end this section by giving the following formula which can be
obtained by direct computation and will be used in the next two
sections.

\begin{equation}\label{2.11}
\begin{array}{ll}
 \displaystyle\frac{\partial W_{\de,z_i,a_{\de,i}}(x)}{\partial z_{i,h}}&\\
 =\left\{
 \begin{array}{ll}
\displaystyle\frac{1}{\de}\Bigl(\frac{a_{\de,i}}{|\phi^\prime(1)||\ln\frac{R}{s_{\de,i}}|}\Bigr)^{(p+1)/2}
\phi^\prime\bigl(\frac{|x-z_i|}{s_{\de,i}}\bigr)
\frac{z_{i,h}-x_h}{|x-z_i|}+ O\left(\frac{1}{|\ln\ep|}\right),
 ~~x\in B_{s_{\de,i}}(z_i),\\
 \,\\
\displaystyle-\frac{a_{\de,i}}{\ln\frac{R}{s_{\de,i}}}\frac{z_{i,h}-x_h}{|x-z_i|^2}+O\left(\frac{1}{|\ln\ep|}\right),
 \qquad\qquad\qquad\qquad\qquad\quad x\in \Omega\setminus B_{s_{\de,i}}(z_i).
\end{array}
\right.\\
\end{array}
\end{equation}

\section{the reduction}

Let

\[
w(x)=
\begin{cases}
\phi(|x|), &|x|\le 1,\\
\phi^\prime(1)\ln |x|,  & |x|>1.
\end{cases}
\]
Then $w\in C^1(\mathbb{R}^2)$. Since $\phi^\prime(1)<0$ and $\ln
|x|$ is harmonic for $|x|>1$, we see that $w$ satisfies

\begin{equation}\label{3.1}
-\Delta w= w_+^{p}, \quad \text{in}\; \mathbb{R}^2.
\end{equation}
Moreover, since $w_+$ is Lip-continuous,  by the Schauder estimate,
$w\in C^{2,\alpha}$ for any $\alpha\in (0,1)$.

Consider the following problem:

\begin{equation}\label{3.2}
-\Delta v-  pw_+^{p-1} v=0,\quad v\in L^\infty(\mathbb{R}^2),
\end{equation}
It is easy to see  that $\frac{\partial w}{\partial x_i}$, $i=1,2,$
is a solution of \eqref{3.2}. Moreover, from Dancer and Yan
\cite{DY}, we know that $w$ is also non-degenerate, in the sense
that the kernel of the operator $Lv:=-\Delta v- pw_+^{p-1} v,~~v\in
D^{1,2}(\mathbb{R}^2)$ is spanned by $\bigl\{\frac{\partial
w}{\partial x_1}, \frac{\partial w}{\partial x_2}\bigr\}$.

Recall that $Z=(z_1,\cdots,z_m)$, and
 $z_j\in\Om$ satisfies

\begin{equation}\label{3.3}
d(z_j,\partial\Om)\ge \varrho>0,\quad |z_i-z_j|\ge \varrho^{\bar
L},\qquad i\ne j,
\end{equation}
where $\varrho>0$ is a fixed small constant,  and $\bar L>0$ is a
large constant.

Let $P_{\de,Z,j}$ be the function defined in \eqref{2.8}.  Set

\[
F_{\de,Z}=\left\{u: u\in L^p(\Om),\; \int_\Om
 \frac{\partial
P_{\de,Z,j}}{\partial z_{j,h}}u =0, j=1,\cdots,m,\;h=1,2 \right\},
\]
and

\[
E_{\de,Z}=\left\{u:\;u\in W^{2,p}(\Om)\cap H_0^1(\Om), \int_\Om
 \Delta\left( \frac{\partial
P_{\de,Z,j}}{\partial z_{j,h}}\right)u =0, j=1,\cdots,m,\; h=1,2
\right\}.
\]

For any $u\in L^p(\Om)$, define $Q_\de u$  as follows:
\[
Q_\de u= u-\sum_{j=1}^m \sum_{h=1}^2
b_{j,h}\left(-\de^2\Delta\Bigl(\frac{\partial P_{\de,Z,j}}{\partial
z_{j,h}}\Bigr)
 \right),
\]
where  the constants $b_{j,h}$, $j=1,\cdots,m$, $h=1, 2$, satisfy

\begin{equation}\label{3.4}
\sum_{j=1}^m \sum_{h=1}^2 b_{j,h}\left(-\de^2\int_{\Om}\Delta\Bigl(
\frac{\partial P_{\de,Z,j}}{\partial z_{j, h}}\Bigr) \frac{\partial
P_{\de,Z,i}}{\partial z_{i,\bar h}}\right)=\int_{\Om} u
\frac{\partial P_{\de,Z,i}}{\partial z_{i,\bar h}}.
\end{equation}

Since  $\int_\Om
 \frac{\partial
P_{\de,Z,j}}{\partial z_{j,h}} Q_\de u=0$, the operator $Q_\de$ can
be regarded as a projection from $L^p(\Omega)$ to $F_{\de,Z}$. In
order to show that we can solve \eqref{3.4} to obtain $b_{j,h}$, we
just need the following estimate ( by \eqref{2.10} and
\eqref{2.11}):

\begin{equation}\label{3.5}
\begin{split}
&-\de^2\int_{\Om}\Delta\Bigl( \frac{\partial P_{\de,Z,j}}{\partial
z_{j, h}}\Bigr)
\frac{\partial P_{\de,Z,i}}{\partial z_{i,\bar h}}\\
=&
p\int_{\Om}\bigl(W_{\de,z_j,a_{\de,j}}-a_{\de,j}\bigr)_+^{p-1}\left(\frac{\partial
W_{\de,z_{j},a_{\de,j}}}{\partial z_{j, h}}-\frac{\partial
a_{\de,j}}{\partial z_{j,h}}\right) \frac{\partial
P_{\de,Z,i}}{\partial z_{i,\bar h}}
\\
=& \delta_{ij h\bar h}\frac
c{|\ln\ep|^{p+1}}+O\left(\frac{\ep}{|\ln\ep|^{p+1}}\right),
\end{split}
\end{equation}
where $c>0$ is a constant, $\delta_{ijh \bar h}=1$,  if $i=j$ and
$h=\bar h$;  otherwise, $\delta_{ijh \bar h}=0$.

 Set

\[
L_\de u= -\de^2 \Delta u-\sum_{j=1}^m p\chi_{\Om_j}\left(
P_{\de,Z}-\kappa_j-\frac{2\pi q(x)}{|\ln\ep|}\right)_+^{p-1}u.
\]

  We have the following lemma.

\begin{lemma}\label{l31}
 There are constants $\rho_0>0$ and $\de_0>0$, such that for any
$\de\in (0,\de_0]$,
 $Z$ satisfying \eqref{3.3},  $ u\in E_{\de,Z}$ with
$Q_\de L_\de u =0$ in $\Omega\setminus \cup_{j=1}^m
 B_{L s_{\de,j}}
(z_j)$ for some $L>0$ large,  then

\[
\|Q_\de  L_\de u\|_{L^p(\Om)}  \ge
\frac{\rho_0\de^{\frac{2}{p}}}{|\ln\de|^{\frac{(p-1)^2}{p}}}
\|u\|_{L^\infty(\Om)}.
\]

\end{lemma}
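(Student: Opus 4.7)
The plan is to argue by contradiction. Suppose the estimate fails: there exist $\de_n\to 0$, configurations $Z_n=(z_{1,n},\dots,z_{m,n})$ obeying \eqref{3.3}, and $u_n\in E_{\de_n,Z_n}$ with $Q_{\de_n}L_{\de_n}u_n\equiv 0$ outside $\bigcup_j B_{Ls_{\de_n,j}}(z_{j,n})$ and $\|u_n\|_{L^\infty(\Om)}=1$, yet $\|Q_{\de_n}L_{\de_n}u_n\|_{L^p(\Om)}=o\bigl(\de_n^{2/p}/|\ln\de_n|^{(p-1)^2/p}\bigr)$. A first bookkeeping step is to solve \eqref{3.4} and verify, using \eqref{3.5} together with \eqref{2.10}--\eqref{2.11}, that the resulting multipliers $b_{j,h}^{(n)}$ are so small that $L_{\de_n}u_n$ and $Q_{\de_n}L_{\de_n}u_n$ differ by a term of strictly lower order in $L^p$; hence it suffices to derive a contradiction from the bound $\|L_{\de_n}u_n\|_{L^p}=o\bigl(\de_n^{2/p}/|\ln\de_n|^{(p-1)^2/p}\bigr)$.

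The core is a blow-up analysis near each $z_{j,n}$. Setting $\tilde u_{j,n}(y)=u_n(z_{j,n}+s_{\de_n,j}y)$ and using \eqref{2.2} together with Lemma~\ref{l2.1}, the rescaled equation reads
\[
\frac{s_{\de_n,j}^2}{\de_n^2}\,(L_{\de_n}u_n)(z_{j,n}+s_{\de_n,j}y)=-\Delta_y\tilde u_{j,n}-p\,w_+(|y|)^{p-1}\tilde u_{j,n}+o(1),
\]
the $o(1)$ coming from the expansion \eqref{2.9}. With $s_{\de_n,j}\sim\de_n|\ln\de_n|^{(p-1)/2}$ one computes that the prefactor $s_{\de_n,j}^{2-2/p}/\de_n^2=\de_n^{-2/p}|\ln\de_n|^{(p-1)^2/p}$ matches exactly the hypothesised $L^p$-smallness, so $-\Delta_y\tilde u_{j,n}-p\,w_+(|y|)^{p-1}\tilde u_{j,n}\to 0$ in $L^p(B_L)$; this is precisely how the exponent $(p-1)^2/p$ enters. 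Since $\|\tilde u_{j,n}\|_\infty\le 1$, elliptic $L^p$ and Schauder estimates give, along a subsequence, $\tilde u_{j,n}\to v_j$ in $C^1_{loc}(\mathbb{R}^2)$ with $v_j\in L^\infty$ solving \eqref{3.2}; by the non-degeneracy recalled just before the lemma, $v_j=c_{j,1}\partial_1 w+c_{j,2}\partial_2 w$. The orthogonality $u_n\in E_{\de_n,Z_n}$, after using $-\de^2\Delta P_{\de,Z,j}=(W_{\de,z_j,a_{\de,j}}-a_{\de,j})_+^p$ and changing variables $x=z_{j,n}+s_{\de_n,j}y$, becomes in the limit $\int_{|y|\le 1}\phi^{p-1}\partial_h\phi\,v_j\,dy=0$ for $h=1,2$. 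Substituting the form of $v_j$ and using the radial symmetry of $\phi$ to kill cross terms forces $c_{j,1}=c_{j,2}=0$, so $\tilde u_{j,n}\to 0$ in $C_{loc}$; equivalently, $u_n\to 0$ uniformly on every $B_{Ls_{\de_n,j}}(z_{j,n})$.

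The outer estimate then closes the argument. On $\Om\setminus\bigcup_j B_{s_{\de_n,j}}(z_{j,n})$ the nonlinear term in $L_{\de_n}u_n$ vanishes, and each correction $-\de_n^2\Delta(\partial_h P_{\de_n,Z_n,j})$ is supported inside $B_{s_{\de_n,j}}(z_{j,n})$ since it equals $p(W_{\de_n,z_{j,n},a_{\de_n,j}}-a_{\de_n,j})_+^{p-1}$ times a smooth factor. Hence $Q_{\de_n}L_{\de_n}u_n=-\de_n^2\Delta u_n$ on $\Om\setminus\bigcup_j B_{Ls_{\de_n,j}}(z_{j,n})$, and by assumption this vanishes. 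Thus $u_n$ is harmonic there with zero boundary values on $\partial\Om$, and the maximum principle bounds $u_n$ in this region by its values on $\bigcup_j\partial B_{Ls_{\de_n,j}}(z_{j,n})$, which tend to $0$ by the inner analysis. Combined with the inner estimate this contradicts $\|u_n\|_\infty=1$.

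The principal technical obstacle is the precise bookkeeping of scales: the projection $Q_\de$ couples the $m$ centres through the linear system \eqref{3.4}, and both the inner blow-up and the orthogonality conditions must be tracked with exactly the sharp power $\de^{2/p}/|\ln\de|^{(p-1)^2/p}$ dictated by $s_\de\sim\de|\ln\de|^{(p-1)/2}$. Any loss of this sharp power — for example from a careless bound on the $b_{j,h}$ or on the boundary effect of $g(x,z_j)$ in \eqref{2.3} — would prevent the contradiction from closing.
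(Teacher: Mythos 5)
Your proposal is correct and follows essentially the same route as the paper: a contradiction argument, first controlling the multipliers $b_{j,h}$ via \eqref{3.4}--\eqref{3.5}, then blowing up at each centre at scale $s_{\de,j}$, invoking the non-degeneracy of $w$ and the orthogonality constraint in $E_{\de,Z}$ to kill the limit, and finally using harmonicity and the maximum principle in the outer region. The only cosmetic difference is that you make the scaling bookkeeping $s_{\de,j}^{2-2/p}/\de^2=\de^{-2/p}|\ln\de|^{(p-1)^2/p}$ and the radial-symmetry cancellation in the orthogonality integral explicit, where the paper leaves them implicit.
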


\begin{proof}
 Set $s_{n,j}=s_{\de_n,j}$.
We will use $\|\cdot\|_p, \|\cdot\|_\infty$ to denote
$\|\cdot\|_{L^p(\Om)}$ and  $ \|\cdot\|_{L^\infty(\Om)}$
respectively.

We argue by contradiction. Suppose that there are $\de_n\to 0$,
$Z_n$ satisfying \eqref{3.3} and $u_n\in E_{\de_n,Z_n}$ with
$Q_{\de_n}L_{\de_n} u_n =0$ in $\Omega\setminus \cup_{j=1}^m
 B_{L s_{n,j}}
(z_{j,n})$, $\|u_n\|_\infty =1$, such that
\[
\|Q_{\de_n} L_{\de_n } u_n\|_{p} \le
\frac{1}{n}\frac{\de_n^{\frac{2}{p}}}{|\ln\de_n|^{\frac{(p-1)^2}{p}}}.
\]

Firstly, we estimate $b_{j,h,n}$ in the following formula:

\begin{equation}\label{3.6}
Q_{\de_n} L_{\de_n } u_n= L_{\de_n } u_n-\sum_{j=1}^m \sum_{h=1}^2
b_{j,h,n} \left(-\de_n^2\Delta\frac{\partial
P_{\de_n,Z_n,j}}{\partial z_{j,h}}\right).
\end{equation}

For each fixed $i$, multiplying \eqref{3.6} by $
 \frac{\partial
P_{\de_n,Z_n,i}}{\partial z_{i,\bar h}}$, noting that

\[
 \int_\Om\bigl( Q_{\de_n} L_{\de_n } u_n\bigr)
 \frac{\partial
P_{\de_n,Z_n,i}}{\partial z_{i,\bar h}}=0,
\]
we obtain

\[
\begin{split}
& \int_\Om
  u_n\, L_{\de_n} \left(\frac{\partial
P_{\de_n,Z_n,i}}{\partial z_{i,\bar h}} \right)= \int_\Om\bigl(
  L_{\de_n} u_n\bigr)\, \frac{\partial
P_{\de_n,Z_n,i}}{\partial z_{i,\bar h}} \\
&=\sum_{j=1}^m
 \sum_{\bar h=1}^2 b_{j,h,n} \int_\Om\left(-\de_n^2\Delta\frac{\partial P_{\de_n,Z_n,j}}{\partial
 z_{j,h}}\right)
\frac{\partial P_{\de_n,Z_n,i}}{\partial z_{i,\bar h}}\\
\end{split}
\]
Using \eqref{2.9} and
 Lemma~\ref{al1}, we obtain

\[
\begin{split}
&\int_\Om
  u_n\, L_{\de_n} \left(\frac{\partial
P_{\de_n,Z_n,i}}{\partial z_{i,\bar h}}\right)\\
&=\int_\Om\left(-\de_n^2\Delta \left(\frac{\partial
P_{\de_n,Z_n,i}}{\partial z_{i,\bar
h}}\right)-\sum_{j=1}^mp\chi_{\Om_j}\left(
P_{\de_n,Z_n}-\kappa_j-\frac{2\pi
q(x)}{|\ln\ep_n|}\right)_+^{p-1}\frac{\partial
P_{\de_n,Z_n,i}}{\partial z_{i,\bar h}}\right)u_n\\
&=p\int_\Om\left(W_{\de_n,z_{i,n},a_{\de_n,i}}-a_{\de_n,i}\right)_+^{p-1}\left(\frac{\partial
W_{\de_n,z_{i,n},a_{\de_n,i}}}{\partial z_{i,\bar h}}-\frac{\partial
a_{\de_n,i}}{\partial z_{i,\bar h}}\right)u_n\\
&\quad-\sum_{j=1}^mp\int_{\Omega_j}\left(W_{\de_n,z_{j,n},a_{\de_n,j}}-a_{\de_n,j}+O\left(\frac{s_{n,j}}{|\ln\ep_n|}\right)\right)_+^{p-1}\frac{\partial
P_{\de_n,Z_n,i}}{\partial
z_{i,\bar h}}u_n\\
&=O\left(\frac{\ep_n^2}{|\ln\ep_n|^p}\right).
\end{split}
\]

Using \eqref{3.5}, we find that
$$
b_{i,h,n}=O\left(\ep_n^2|\ln\ep_n|\right).
$$
Therefore,
\[
\begin{split}
&\sum_{j=1}^m\sum_{h=1}^2b_{j,h,n}\left(-\de_n^2\Delta\frac{\partial
P_{\de_n,Z_n,j}}{\partial z_{j,h}}\right)\\
&=p\sum_{j=1}^m\sum_{h=1}^2b_{j,h,n}\left(W_{\de_n,z_{j,n},a_{\de_n,j}}-a_{\de_n,j}\right)_+^{p-1}
\left(\frac{\partial W_{\de_n,z_{j,n},a_{\de_n,j}}}{\partial
z_{j,h}}-\frac{\partial a_{\de_n,j}}{\partial z_{j,h}}\right)\\
&=O\left(\sum_{j=1}^m\sum_{h=1}^2  \frac{\ep_n^{\frac{2}{p}-1}|b_{j,h,n}|}{|\ln\ep_n|^{p}}\right)\\
&=O\left(\frac{\ep_n^{\frac{2}{p}+1}}{|\ln\ep_n|^{p-1}}\right)\quad
\text{in}~~L^p(\Om).
\end{split}
\]

Thus, we obtain

\[
L_{\de_n}u_n = Q_{\de_n}L_{\de_n}u_n
+O\left(\frac{\ep_n^{\frac{2}{p}+1}}{|\ln\ep_n|^{p-1}}\right)
 =O\left(\frac 1n\frac{\de_n^{\frac{2}{p}}}{|\ln\de_n|^{\frac{(p-1)^2}{p}}}\right).
\]

For any fixed $i$, define

\[
\tilde u_{i,n} (y)= u_n(s_{n,i} y+z_{i,n}).
\]

Let
\[
\tilde L_n u= -\Delta u
-\sum_{l=1}^mp\frac{s_{n,i}^2}{\de_n^2}\chi_{\Om_l}\left(
P_{\de_n,Z_n}(s_{n,i}y+z_{i,n})-\kappa_l-\frac{2\pi
q}{|\ln\ep_n|}\right)_+^{p-1}u,
\]
Then

\[
s_{n,i}^{\frac{2}{p}}\times\frac{\de_n^2}{s_{n,i}^{2}}\|\tilde L_n
\tilde u_{i,n}\|_p= \| L_{\de_n} u_n\|_p.
\]

Noting that
$$ \left(\frac{\de_n}{s_{n,i}}\right)^2=O\left(\frac{1}{|\ln\de_n|^{p-1}}\right),$$
we find that
$$
L_{\de_n}u_n=o\left(\frac{\de_n^{\frac{2}{p}}}{|\ln\de_n|^{\frac{(p-1)^2}{p}}}\right).
$$
 As a result,

\[
\tilde  L_{n } \tilde u_{i,n}
 =o(1),\quad \text{in}\; L^p(\Omega_n),
\]
where $\Omega_n=\bigl\{y: s_{n,i} y+z_{i,n}\in\Omega\bigr\}$.

Since $\|\tilde u_{i,n}\|_\infty=1$, by the regularity theory of
elliptic equations,  we may assume that

\[
\tilde u_{i,n}\to u_i,  \quad \text{in}\; C_{loc}^1(\mathbb{R}^2).
\]

It is easy to see that
\[
\begin{split}
&\sum_{l=1}^m\frac{s_{n,i}^2}{\de_n^2}\chi_{\Om_l}\left(
P_{\de_n.Z_n}(s_{n,i}y+z_{i,n})-\kappa_l-\frac{2\pi q}{|\ln\ep_n|}\right)_+^{p-1}\\
&=\frac{s_{n,i}^2}{\de_n^2}\left(W_{\de_n,z_{i,n},a_{\de_n,i}}-a_{\de_n,i}
+O\left(\frac{s_{n,i}}{|\ln\ep_n|}\right)\right)_+^{p-1}+o(1)\\
&\rightarrow w_+^{p-1}.
\end{split}
 \]
  Then, by Lemma~\ref{al1}, we find that
  $u_i$ satisfies

\[
-\Delta u_i-pw_+^{p-1} u_i= 0.
\]
Now from the Proposition 3.1 in \cite{DY}, we have

\begin{equation}\label{3.7}
u_i= c_1 \frac{\partial w}{\partial x_1}+ c_2 \frac{\partial
w}{\partial x_2}.
\end{equation}

Since

\[
\int_\Om \Delta \bigl(\frac{\partial P_{\de_n,Z_n,i}}{\partial z_{i,
h}}\bigr) u_n =0,
\]
we find that

\[
\int_{\mathbb{R}^2}\phi_+^{p-1} \frac{\partial \phi}{\partial z_h}
u_i =0,
\]
which, together with \eqref{3.7}, gives $u_i=0$. Thus,

\[
\tilde u_{i,n} \to 0,\quad \text{in}\; C^1(B_{L}(0)),
\]
for any $L>0$, which implies that $u_n=o(1)$ on $\partial
B_{Ls_{n.i}}(z_{i,n})$.

By assumption,

\[
Q_{\de_n} L_{\de_n} u_n = 0,\quad\text{in}\; \Om\setminus
\cup_{i=1}^k B_{L s_{n,i}}(z_{i,n}).
\]

On the other hand, by Lemma~\ref{al1}, for $j=1,\cdots,m$, we have

\[
\left(P_{\de_n,Z_n} -\kappa_j-\frac{2\pi q(x)}{|\ln\ep_n|} \right)_+
=0, \quad x\in \Om_j\setminus  B_{L s_{n,j}}(z_{j,n}).
\]
Thus, we find that

\[
-\Delta u_n  =0,\quad \text{in}~\Om\setminus  \cup_{i=1}^m  B_{L
s_{n,i}}(z_{i,n}).
\]
However, $u_n=0$ on $\partial\Om$ and $u_n=o(1)$ on $\partial
B_{Ls_{n,i}}(z_{i,n})$, $i=1,\cdots,m$. So we have

\[
u_n=o(1).
\]
 This is a contradiction.

\end{proof}

\begin{proposition}\label{p32}

$Q_\de L_\de$ is one to one and onto from $E_{\de,Z}$ to
$F_{\de,Z}$.

\end{proposition}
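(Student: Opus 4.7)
The plan is to decompose the statement into injectivity, which follows directly from Lemma~\ref{l31}, and surjectivity, which I would establish by a Fredholm index argument. The main obstacle, namely ruling out approximate kernel elements that concentrate near the vortex cores, has already been absorbed into Lemma~\ref{l31}; what remains is a soft functional-analytic step.

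For injectivity, I would take $u\in E_{\de,Z}$ with $Q_\de L_\de u=0$. Since the vanishing is global, it holds in particular on $\Om\setminus\cup_{j=1}^m B_{Ls_{\de,j}}(z_j)$, so the hypothesis of Lemma~\ref{l31} is met trivially. The conclusion of that lemma reads $0\ge \rho_0\de^{2/p}|\ln\de|^{-(p-1)^2/p}\|u\|_\infty$, which forces $u\equiv 0$; here $\|u\|_\infty$ is finite because $W^{2,p}(\Om)\hookrightarrow L^\infty(\Om)$ in dimension two for $p>1$.

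For surjectivity, I would write $L_\de=-\de^2\Delta-V_\de$, where $V_\de:=\sum_{j=1}^m p\chi_{\Om_j}\bigl(P_{\de,Z}-\kappa_j-\tfrac{2\pi q}{|\ln\ep|}\bigr)_+^{p-1}$ is bounded in $L^\infty(\Om)$ (which one reads off from \eqref{2.2}, using that the peak of $W_{\de,z,a}$ exceeds $a$ only by $O(|\ln\de|^{-1})$). Since $(-\de^2\Delta)^{-1}\colon L^p(\Om)\to W^{2,p}(\Om)\cap H_0^1(\Om)$ is an isomorphism and the embedding $W^{2,p}(\Om)\hookrightarrow L^p(\Om)$ is compact in two dimensions, the operator $(-\de^2\Delta)^{-1}V_\de$ is compact on $W^{2,p}(\Om)\cap H_0^1(\Om)$, so $L_\de$ is Fredholm of index zero from $W^{2,p}(\Om)\cap H_0^1(\Om)$ to $L^p(\Om)$.

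Finally, $E_{\de,Z}$ and $F_{\de,Z}$ are both closed of codimension $2m$, and the non-degeneracy of the $2m\times 2m$ pairing matrix displayed in \eqref{3.5} ensures that $Q_\de$ is a well-defined bounded projection of $L^p(\Om)$ onto $F_{\de,Z}$. Viewing $Q_\de L_\de$ as the composition $E_{\de,Z}\hookrightarrow W^{2,p}(\Om)\cap H_0^1(\Om)\xrightarrow{L_\de} L^p(\Om)\xrightarrow{Q_\de}F_{\de,Z}$, the three factors have Fredholm indices $-2m$, $0$, and $+2m$ respectively, so $Q_\de L_\de\colon E_{\de,Z}\to F_{\de,Z}$ is Fredholm of index zero. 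Combined with the injectivity proved above, the Fredholm alternative yields surjectivity, completing the proof.
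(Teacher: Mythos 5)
Your proposal is correct and reaches the same conclusion, but the surjectivity step is organized differently from the paper, so a comparison is worthwhile. For injectivity, both you and the paper simply feed $Q_\de L_\de u=0$ into Lemma~\ref{l31}; your remark that $\|u\|_\infty<\infty$ via $W^{2,p}(\Om)\hookrightarrow L^\infty(\Om)$ for $p>1$ in two dimensions is a correct (if tacit in the paper) justification. For surjectivity, the paper takes the more hands-on route: it first proves directly, by Riesz representation in $H_0^1(\Om)$ followed by the $L^p$-estimate, that $-\de^2\Delta$ is a bijection from $E_{\de,Z}$ onto $F_{\de,Z}$; then it rewrites $Q_\de L_\de u=h$ as $u=(\text{compact operator})u+(\text{data})$ inside $E_{\de,Z}$ and applies the Fredholm alternative, invoking the injectivity already proved. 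You instead argue at the level of abstract Fredholm index: $L_\de$ is an index-zero perturbation of $-\de^2\Delta$ on $W^{2,p}\cap H_0^1$, and you bracket it by the inclusion $E_{\de,Z}\hookrightarrow W^{2,p}\cap H_0^1$ (index $-2m$) and the projection $Q_\de\colon L^p\to F_{\de,Z}$ (index $+2m$), so the composite has index zero, and injectivity then forces surjectivity. Both arguments hinge on the same two ingredients (compactness of the potential term and the a priori injectivity), but yours replaces the paper's explicit construction of a right inverse for $-\de^2\Delta$ on the reduced spaces with the general index-additivity theorem; this is slightly heavier machinery but cleaner and more modular. One small point you should make explicit to make the index count airtight: the codimension count for $E_{\de,Z}$ and $F_{\de,Z}$ (and the claim that $Q_\de$ has a $2m$-dimensional kernel) relies on the linear independence of the $2m$ functions $\frac{\partial P_{\de,Z,j}}{\partial z_{j,h}}$ and of $\Delta\frac{\partial P_{\de,Z,j}}{\partial z_{j,h}}$, which is what the non-degeneracy of the pairing matrix in \eqref{3.5} guarantees — you gesture at this but it is the crux of the codimension claim, not just of the well-definedness of $Q_\de$.
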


\begin{proof}

Suppose that $Q_\de L_\de u=0$. Then, by  Lemma~\ref{l31},  $u=0$.
Thus,  $Q_\de L_\de$ is one to one.

Next,  we prove that  $Q_\de L_\de $ is an onto map from $E_{\de,Z}$
to $F_{\de,Z}$.

Denote

\[
\tilde E= \Bigl\{ u:  u\in H_0^1(\Omega), \; \int_{\Omega}
D\frac{\partial P_{\de,Z,j}}{\partial z_{j,h} } Du=0,\;
j=1,\cdots,m, h=1,  2\Bigr\}.
\]
Note that $E_{\de,Z}=\tilde E\cap W^{2,p}(\Omega)$.

 For any
$\tilde h\in F_{\de,Z}$, by the Riesz representation theorem, there
is a unique $u\in H_0^1(\Om)$, such that

\begin{equation}\label{3.8}
\de^2\int_\Om Du D\varphi =\int_\Om \tilde h \varphi,\quad \forall\;
\varphi\in   H_0^1(\Om).
\end{equation}
On the other hand,  from $\tilde h\in F_{\de,Z}$, we find that $u\in
\tilde E$. Moreover, by the  $L^p$-estimate, we deduce that $u\in
W^{2,p}(\Omega)$. As a result, $u\in E_{\de,Z}$. Thus, we see that
$Q_\de (-\de^2\Delta )=-\de^2\Delta$ is an one to one and onto map
from $ E_{\de,Z}$ to $F_{\de,Z}$. On the other hand, $Q_\de L_\de
u=h$ is equivalent to

\begin{equation}\label{3.9}
u=p\de^{-2} (-Q_\de\Delta )^{-1}\left[
Q_\de\left(\sum_{j=1}^m\chi_{\Om_j} \left(
P_{\de,Z}-\kappa_j-\frac{2\pi q(x)}{|\ln\ep|}\right)_+^{p-1}
u\right)\right]+\de^{-2} (-Q_\de\Delta )^{-1} h,\quad u\in
E_{\de,Z}.
\end{equation}
It is easy to check that $\de^{-2} (-Q_\de\Delta )^{-1}\left[
Q_\de\left(\sum_{j=1}^m\chi_{\Om_j} \left(
P_{\de,Z}-\kappa_j-\frac{2\pi q(x)}{|\ln\ep|}\right)_+^{p-1}
u\right)\right]$ is a compact operator in $E_{\de,Z}$. By the
Fredholm alternative, \eqref{3.9} is solvable if and only if

\[
u=p\de^{-2} (-Q_\de\Delta )^{-1}\left[
Q_\de\left(\sum_{j=1}^m\chi_{\Om_j} \left(
P_{\de,Z}-\kappa_j-\frac{2\pi q(x)}{|\ln\ep|}\right)_+^{p-1}
u\right)\right]
\]
has trivial solution, which is true since
 $Q_\de L_\de$ is a one to one map.
Thus the result follows.

\end{proof}

Now  consider the equation

\begin{equation}\label{3.10}
Q_\de L_\de \omega=
 Q_\de l_\de + Q_\de R_\de(\omega),
\end{equation}
where

\begin{equation}\label{3.11}
l_\de =\sum_{j=1}^m \chi_{\Om_j}\left( P_{\de,Z}
-\kappa_j-\frac{2\pi q(x)}{|\ln\ep|} \right)_+^{p}-\sum_{j=1}^m
\left(W_{\de,z_j,a_{\de,j}}-a_{\de,j}\right)_+^{p},
\end{equation}
and

\begin{equation}\label{3.12}
\begin{split}
R_\de(\omega)=& \sum_{j=1}^m \chi_{\Om_j}\left(P_{\de,Z}
-\kappa_j+\omega-\frac{2\pi q(x)}{|\ln\ep|} \right)_+^{p}
-\sum_{j=1}^m \chi_{\Om_j}
 \left(P_{\de,Z}
-\kappa_j-\frac{2\pi q(x)}{|\ln\ep|}\right)_+^{p} \\
&-\sum_{j=1}^m \chi_{\Om_j}p\left(P_{\de,Z} -\kappa_j-\frac{2\pi
q(x)}{|\ln\ep|}\right)_+^{p-1}\omega.
\end{split}
\end{equation}

Using Proposition~\ref{p32}, we can rewrite \eqref{3.10} as

\begin{equation}\label{3.13}
\omega =G_\de\omega =: (Q_\de L_\de)^{-1} Q_\de \bigl(
  l_\de +   R_\de(\omega)\bigr).
\end{equation}

The next Proposition enables us to reduce the problem of finding a
solution for \eqref{1} to a finite dimensional problem.

\begin{proposition}\label{p33}

There is an $\de_0>0$, such that for any $\de\in (0,\de_0]$ and  $Z$
 satisfying  \eqref{3.3},  \eqref{3.10} has a unique solution $\omega_\de\in
 E_{\de,Z}$, with

\[
\|\omega_\de\|_\infty =O\Bigl(\de|\ln\de|^{\frac{p-1}{2}}\Bigr).
\]
\end{proposition}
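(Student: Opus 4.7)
The plan is to solve the fixed point equation \eqref{3.13} by the Banach contraction mapping theorem applied to the ball
\[
B_\de:=\Bigl\{\omega\in E_{\de,Z}:\;\|\omega\|_\infty\le C_0\de|\ln\de|^{(p-1)/2}\Bigr\}
\]
for a suitable large constant $C_0$. By Proposition~\ref{p32}, $(Q_\de L_\de)^{-1}:F_{\de,Z}\to E_{\de,Z}$ exists, so $G_\de=(Q_\de L_\de)^{-1}Q_\de(l_\de+R_\de(\cdot))$ is a well-defined self-map of $E_{\de,Z}$. The existence and uniqueness in $B_\de$ then follow once I establish (a) $G_\de(B_\de)\subset B_\de$ and (b) $G_\de$ is a contraction on $B_\de$.

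The key preliminary step is to estimate $\|l_\de\|_p$. First I check, by the appendix (the same facts used in the proof of Lemma~\ref{l31}), that $\mathrm{supp}\,l_\de\subset\cup_{i=1}^m B_{Ls_{\de,i}}(z_i)$ for $L$ large: outside these balls both $(P_{\de,Z}-\kappa_j-2\pi q/|\ln\ep|)_+$ and $(W_{\de,z_j,a_{\de,j}}-a_{\de,j})_+$ vanish. On $B_{Ls_{\de,i}}(z_i)$, the expansion \eqref{2.9} gives
\[
P_{\de,Z}(x)-\kappa_i-\frac{2\pi q(x)}{|\ln\ep|}=W_{\de,z_i,a_{\de,i}}(x)-a_{\de,i}+E_{\de,i}(x),
\qquad |E_{\de,i}(x)|\le \frac{C s_{\de,i}}{|\ln\ep|}.
\]
Since both $W_{\de,z_i,a_{\de,i}}-a_{\de,i}$ and $E_{\de,i}$ are $O(1/|\ln\ep|)$ pointwise on this ball, the Lipschitz/Hölder estimate for $t\mapsto t_+^{p}$ yields $|l_\de(x)|\le C s_{\de,i}/|\ln\ep|^{p}$, and integrating over a ball of area $O(s_{\de,i}^2)$ gives
\[
\|l_\de\|_{p}\le C\,\frac{s_{\de,i}^{1+2/p}}{|\ln\ep|^{p}},
\]
which after combining with Lemma~\ref{l31} is comfortably small enough to produce the claimed bound on $\omega_\de$.

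Next I handle $R_\de(\omega)$. For $\omega\in B_\de$, $\|\omega\|_\infty$ is much smaller than $1/|\ln\de|$, so the inner argument of each positive part only changes by a small amount and $\mathrm{supp}\,R_\de(\omega)\subset\cup_{i=1}^m B_{Ls_{\de,i}}(z_i)$ as well. Taylor-expanding $t_+^{p}$ to second order (for $p\ge 2$) or using the $(p-1)$-Hölder continuity of $(t_+^{p})'=p t_+^{p-1}$ (for $1<p<2$) gives pointwise
\[
|R_\de(\omega)|\le C\bigl(|P_{\de,Z}-\kappa_j-2\pi q/|\ln\ep||^{p-2}+|\omega|^{p-2}\bigr)\omega^{2}
\quad\text{(or the analogous Hölder version)},
\]
which on integration over the same small balls gives $\|R_\de(\omega)\|_p=o(\|l_\de\|_p)$ uniformly for $\omega\in B_\de$. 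A parallel computation, replacing $\omega^{2}$ by $(\omega_1-\omega_2)\cdot\max(|\omega_1|,|\omega_2|)$, shows $\|R_\de(\omega_1)-R_\de(\omega_2)\|_p\le o(1)\,\|\omega_1-\omega_2\|_\infty$.

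To close the argument I apply Lemma~\ref{l31} to $u=G_\de\omega$: because $l_\de+R_\de(\omega)$ is supported inside $\cup B_{Ls_{\de,i}}(z_i)$ and $Q_\de$ only adds terms proportional to $\Delta(\partial P_{\de,Z,j}/\partial z_{j,h})$ (also supported in $B_{s_{\de,j}}(z_j)$), the hypothesis $Q_\de L_\de u=0$ outside $\cup B_{Ls_{\de,j}}(z_j)$ of Lemma~\ref{l31} is satisfied. Hence
\[
\|G_\de\omega\|_\infty\le C\,\frac{|\ln\de|^{(p-1)^2/p}}{\de^{2/p}}\bigl(\|l_\de\|_p+\|R_\de(\omega)\|_p\bigr),
\]
and the combined estimates give $G_\de(B_\de)\subset B_\de$ with contraction factor $o(1)$. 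Banach's theorem then produces the required unique fixed point $\omega_\de\in B_\de$. The main technical obstacle is the careful bookkeeping of the support conditions (so that Lemma~\ref{l31} is applicable at each step) together with the low-regularity Hölder estimates for $R_\de$ in the range $1<p<2$; once these are in hand the reduction is a standard contraction argument.
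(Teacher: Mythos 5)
Your proposal is essentially the paper's own proof: a Banach contraction argument on the ball $\{\|\omega\|_\infty\lesssim\de|\ln\de|^{(p-1)/2}\}$ in $E_{\de,Z}$, using the support bookkeeping from Lemma~\ref{al1} to invoke the coercivity Lemma~\ref{l31}, together with the $L^p$-estimates for $l_\de$ and $R_\de(\omega)$ (your bound $\|l_\de\|_p\lesssim s_{\de,i}^{1+2/p}/|\ln\ep|^p$ agrees with the paper's $O(\de^{1+2/p}/|\ln\de|^{(p-1)/2+1/p})$ after using $s_{\de,i}\sim\de|\ln\de|^{(p-1)/2}$). The only step you gloss over is the explicit estimate of the projection coefficients $b_{j,h}$ needed to justify $\|Q_\de(l_\de+R_\de(\omega))\|_p\le C(\|l_\de\|_p+\|R_\de(\omega)\|_p)$, which the paper verifies directly; that computation is straightforward from \eqref{3.4}--\eqref{3.5} and the support conditions, so the plan is sound.
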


\begin{proof}

It follows from Lemma~\ref{al1} that if $L$ is large enough, $\de$
is small then

\[
 \left(P_{\de,Z}
-\kappa_j-\frac{2\pi q(x)}{|\ln\ep|} \right)_+=0, \quad x\in
\Om_j\setminus  B_{Ls_{\de,j}}(z_j),j=1,\cdots,m.
\]

Let

\[
M=  E_{\de,Z}\cap\Bigl\{ \|\omega\|_\infty\le
\de|\ln\de|^{\frac{p-1}{2}}\Big\}.
\]
Then $M$ is complete under $L^\infty$ norm  and $G_\de$ is a map
from $ E_{\de,Z}$ to $ E_{\de,Z}$. We will show that $G_\de $ is a
contraction map from $M$ to $M$.

Step~1.  $G_\de$ is a map from $M$ to $M$.

 For any $\omega\in M$, similar to Lemma~\ref{al1},
it is easy to  prove that for large $L>0$, $\de$ small

\begin{equation}\label{3.14}
 \left(P_{\de,Z}+\omega
-\kappa_j-\frac{2\pi q(x)}{|\ln\ep|}\right)_+=0, \quad \text{in}\;
\Om_j\setminus  B_{Ls_{\de,j}}(z_j).
\end{equation}
Note also that for any $u\in L^\infty(\Om)$,

\[
Q_\de u= u\quad \text{in}\; \Om\setminus  \cup_{j=1}^m
B_{Ls_{\de,j}}(z_j).
\]
Therefore, using Lemma~\ref{al1}, \eqref{3.11} and \eqref{3.12}, we
find that for any $\omega\in M$,

\[
 Q_\de l_\de + Q_\de R_\de(\omega)= l_\de + R_\de(\omega)
=0, \quad \text{in}\; \Om\setminus  \cup_{j=1}^m
B_{Ls_{\de,j}}(z_j).
\]
So, we can apply Lemma~\ref{l31} to obtain

\[
\| (Q_\de L_\de)^{-1} \bigl(
 Q_\de l_\de + Q_\de R_\de(\omega)\bigr)\|_\infty
\le \frac{C|\ln\de|^{\frac{(p-1)^2}{p}}}{\de^{\frac{2}{p}}} \| Q_\de
l_\de  +  Q_\de R_\de(\omega)\|_p.
\]

Thus, for any
 $\omega\in M$, we have

\begin{equation}\label{3.15}
\begin{split}
\| G_\de(\omega)\|_\infty =& \| (Q_\de L_\de)^{-1} Q_\de \bigl(
  l_\de +  R_\de(\omega)\bigr)\|_\infty\\
\le & \frac{C|\ln\de|^{\frac{(p-1)^2}{p}}}{\de^{\frac{2}{p}}}\|Q_\de
\bigl( l_\de+ R_\de(\omega)\bigr)\|_p.
\end{split}
\end{equation}

It follows from \eqref{3.4}--\eqref{3.5} that the constant
$b_{j,h}$, corresponding to $u\in L^\infty(\Om)$,  satisfies

\[
|b_{j,h}| \le C |\ln\de|^{p+1}\sum_{i,\,\bar h}
 \int_\Om \Bigl|\frac{\partial P_{\de,Z,i}}
{\partial z_{i,\bar h}}\Bigr| |u|.
\]

 Since

\[
l_\de+ R_\de(\omega)=0, \quad \text{in}\; \Om\setminus  \cup_{j=1}^m
B_{Ls_{\de,j}}(z_j),
\]
 we find that the constant
$b_{j,h}$, corresponding to $l_\de+ R_\de(\omega)$ satisfies

\[
\begin{split}
|b_{j,h}|\le & C|\ln\de|^{p+1} \sum_{i,\,\bar h} \left(\sum_{j=1}^m
\int_{B_{Ls_{\de,j} }(z_j)}
 \Bigl|\frac{\partial P_{\de,Z,i}}
{\partial z_{i,\bar h}}\Bigr||l_\de+ R_\de(\omega)
|\right)\\
\le & C\ep^{1-\frac{2}{p}}|\ln\ep|^{p}\|l_\de+ R_\de(\omega) \|_p.
\end{split}
\]
As a result,

\[
\begin{split}
&\|Q_\de (l_\de+ R_\de(\omega))\|_p \\
\le & \| l_\de+ R_\de(\omega)\|_p+C \sum_{j,\,h} |b_{j,h}| \left\|
-\de^2\Delta\Bigl(\frac{\partial P_{\de,Z,j}}{\partial
z_{j,h}}\Bigr)
\right\|_p\\
\le & C \| l_\de\|_p+ C\| R_\de(\omega) \|_p.
\end{split}
\]

On the other hand, from Lemma~\ref{al1} and \eqref{2.9}, we can
deduce

\[
\begin{split}
\|l_\de\|_p =&\left\|\sum_{j=1}^m\chi_{\Om_j}\left(
P_{\de,Z}-\kappa_j-\frac{2\pi q(x)}{|\ln\ep|}\right)_+^{p}-\sum_{j=1}^m\bigl(W_{\de,z_j,a_{\de,j}}-a_{\de,j}\bigr)_+^{p}\right\|_p\\
\le&\sum_{j=1}^m\frac{Cs_{\de,j}}{|\ln\ep|}\Big\|\bigl(W_{\de,z_j,a_{\de,j}}-a_{\de,j}\bigr)_+^{p-1}\Big\|_p\\
=&O\left(\frac{\de^{1+\frac{2}{p}}}{|\ln\de|^{\frac
{p-1}2+\frac{1}p}}\right).
\end{split}
\]

For the estimate of $\| R_\de(\omega) \|_p$, we have
\begin{equation}\label{3.16}
\begin{split}
\| R_\de(\omega) \|_p=&\bigg\|\sum_{j=1}^m\chi_{\Om_j}\Big(
P_{\de,Z}-\kappa_j+\omega-\frac{2\pi
q(x)}{|\ln\ep|}\Big)_+^{p}-\sum_{j=1}^m\chi_{\Om_j}\Big(
P_{\de,Z}-\kappa_j-\frac{2\pi q(x)}{|\ln\ep|}\Big)_+^{p}\\
&-\sum_{j=1}^mp\chi_{\Om_j}\Big(
P_{\de,Z}-\kappa_j-\frac{2\pi q(x)}{|\ln\ep|}\Big)_+^{p-1}\omega\bigg\|_p\\
\le&\sum_{j=1}^mC\|\omega\|_\infty^2\left\|\chi_{\Om_j}\left(
P_{\de,Z}-\kappa_j-\frac{2\pi q(x)}{|\ln\ep|}\right)_+^{p-2}\right\|_p\\
=&O\left(\frac{\de^{\frac{2}{p}}\|\omega\|_\infty^2}{|\ln\de|^{p-3+\frac{1}p}}\right).
\end{split}
\end{equation}

Thus, we obtain

\begin{equation}\label{3.17}
\begin{split}
\| G_\de(\omega)\|_\infty \le &
\frac{C|\ln\de|^{\frac{(p-1)^2}{p}}}{\de^{\frac{2}{p}}}\Bigl(\|
l_\de \|_p+\| R_\de(\omega)\|_p
\Bigr)\\
\le  & C|\ln\de|^{\frac{(p-1)^2}{p}}\left(\frac{\de}{|\ln\de|^{\frac
{p-1}2+\frac{1}p}}+\frac{\|\omega\|_\infty^2}{|\ln\de|^{p-3+\frac{1}p}}\right)\\
\le&\de|\ln\de|^{\frac{p-1}2}
\end{split}
\end{equation}

Thus, $G_\de$ is a map from $M$ to $M$.

 Step~2.  $G_\de$ is a
contraction map.

In fact, for any $\omega_i\in M$, $i=1,2 $, we have

\[
G_\de \omega_1-G_\de \omega_2= (Q_\de L_\de)^{-1} Q_\de \bigl(
R_\de(\omega_1)-R_\de(\omega_2)\bigr).
\]
Noting that

\[
R_\de(\omega_1)=R_\de(\omega_2)=0,\quad\text{in}\; \Om\setminus
\cup_{j=1}^m B_{Ls_{\de,j}}(z_j),
\]
we can deduce as in Step~1 that
\[
\begin{split}
\|G_\de \omega_1-G_\de \omega_2\|_\infty\le& \frac{C|\ln\de|^{\frac{(p-1)^2}{p}}}{\de^{\frac{2}{p}}}\|R_\de(\omega_1)-R_\de(\omega_2)\|_p\\
\le&C|\ln\de|^{p-1}\left(\frac{\|\omega_1\|_\infty}{|\ln\de|^{p-2}}+\frac{\|\omega_2\|_\infty}{|\ln\de|^{p-2}}\right)\|\omega_1-\omega_2\|_\infty\\
\le&C\de|\ln\de|^{\frac {p+1}2}\|\omega_1-\omega_2\|_\infty \le\frac
12 \|\omega_1-\omega_2\|_\infty.
\end{split}
\]

Combining Step~1 and Step~2,  we have proved that $G_\de$ is a
contraction map from $M$ to $M$. By
  the
 contraction mapping theorem, there is an unique $\omega_\de\in M$, such
that $\omega_\de= G_\de\omega_\de$. Moreover, it follows from
\eqref{3.17} that

\[
\|\omega_\de\|_\infty\le \de|\ln\de|^{\frac {p-1}2}.
\]

\end{proof}

\section{Proof of The main results}

In this section, we will choose $Z$, such that $ \sum_{j=1}^m
P_{\de,Z,j}+\omega_\de$, where $\omega_\de$ is the map obtained in
Proposition~\ref{p33}, is a solution of \eqref{1}.

Define
\[
I(u)=\frac{\de^2}{2}\int_\Omega
|Du|^2-\sum_{j=1}^m\frac{1}{p+1}\int_\Omega\chi_{\Om_j}\left(u-\kappa_j-\frac{2\pi
q(x)}{|\ln\ep|}\right)_+^{p+1}
\]
and
\begin{equation}\label{K}
 K(Z)= I\left( P_{\de,Z} +\omega_\de\right).
\end{equation}
It is well known that  if $Z$ is a critical point of $K(Z)$, then
$\sum_{j=1}^m P_{\de,Z,j} +\omega_\de$ is a solution of \eqref{1}.

In the following,  we will prove that $K(Z)$ has a critical point.

 \begin{lemma}\label{l42}

 We have

 \[
 K(Z)= I\left(\sum_{j=1}^m P_{\de,Z,j}
\right)+O\left(
 \frac{\ep^3}{|\ln\ep|^p}\right).
\]

 \end{lemma}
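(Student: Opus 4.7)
The plan is to expand $I$ by Taylor's theorem around $P_{\de,Z}$ in the direction $\omega_\de$ and to show that each remainder is of size at most $O(\ep^3/|\ln\ep|^p)$, relying throughout on the sup bound $\|\omega_\de\|_\infty=O(\de|\ln\de|^{(p-1)/2})$ from Proposition~\ref{p33} and on the fact that every nonlinear term involved is supported in $\bigcup_{j=1}^m B_{Ls_{\de,j}}(z_j)$, a set of Lebesgue measure $O(s_\de^2)\sim O(\ep^2)$.

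Concretely, I would write
\begin{equation*}
K(Z)-I\bigl(P_{\de,Z}\bigr)=\bigl\langle I'(P_{\de,Z}),\omega_\de\bigr\rangle+\tfrac{1}{2}\bigl\langle I''\bigl(P_{\de,Z}+\theta_\de\omega_\de\bigr)\omega_\de,\omega_\de\bigr\rangle,\qquad\theta_\de\in(0,1).
\end{equation*}
For the linear term, integrating by parts (using $\omega_\de\in H^1_0(\Om)$) and invoking $-\de^2\Delta P_{\de,Z,j}=(W_{\de,z_j,a_{\de,j}}-a_{\de,j})_+^p$ together with the definition \eqref{3.11} of $l_\de$ gives
$\langle I'(P_{\de,Z}),\omega_\de\rangle=-\int_\Om l_\de\,\omega_\de$. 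Because $l_\de$ vanishes outside $\bigcup_j B_{Ls_{\de,j}}(z_j)$, H\"older's inequality combined with the estimate $\|l_\de\|_p=O(\de^{1+2/p}/|\ln\de|^{(p-1)/2+1/p})$ (obtained inside the proof of Proposition~\ref{p33}) and the sup bound on $\omega_\de$ yield a contribution well below $\ep^3/|\ln\ep|^p$.

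For the quadratic term I would use
\begin{equation*}
\bigl\langle I''(v)\omega_\de,\omega_\de\bigr\rangle=\de^2\!\int_\Om|D\omega_\de|^2-p\sum_{j=1}^m\int_\Om\chi_{\Om_j}\Bigl(v-\kappa_j-\tfrac{2\pi q}{|\ln\ep|}\Bigr)_+^{p-1}\omega_\de^2.
\end{equation*}
The sum is estimated by noting that on the support of each integrand \eqref{2.2} and \eqref{2.9} give the pointwise bound $(v-\kappa_j-\tfrac{2\pi q}{|\ln\ep|})_+=O(1/|\ln\ep|)$, so the sum is $O(\|\omega_\de\|_\infty^2 s_\de^2/|\ln\ep|^{p-1})$. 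To handle the $H^1$ piece $\de^2\int|D\omega_\de|^2$, I would test the equation $Q_\de L_\de\omega_\de=Q_\de(l_\de+R_\de(\omega_\de))$ against $\omega_\de$; since $\omega_\de\in E_{\de,Z}$, the $Q_\de$-correction drops out upon integration, leaving
\begin{equation*}
\de^2\!\int_\Om|D\omega_\de|^2=p\sum_{j=1}^m\int_\Om\chi_{\Om_j}\Bigl(P_{\de,Z}-\kappa_j-\tfrac{2\pi q}{|\ln\ep|}\Bigr)_+^{p-1}\omega_\de^2+\int_\Om\bigl(l_\de+R_\de(\omega_\de)\bigr)\omega_\de,
\end{equation*}
whose right-hand side is controlled by the estimates already established in the proof of Proposition~\ref{p33}.

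The main technical obstacle is the bookkeeping between the two small parameters, related by $\de=\ep(2\pi/|\ln\ep|)^{(p-1)/2}$ and $s_\de\sim\ep$: the natural estimates come out in mixed powers of $\de$ and $|\ln\de|$ and must be converted to the $(\ep,|\ln\ep|)$ scale. With care, each of the contributions above turns out to be $O(\ep^4/|\ln\ep|^{p-1})$, which is comfortably smaller than the claimed $O(\ep^3/|\ln\ep|^p)$. The most delicate point is extracting the factor $|\ln\ep|^{-(p-1)}$ from the quadratic nonlinearity, as this requires the sharp pointwise bound $v_+=O(1/|\ln\ep|)$ on the support rather than merely $v_+=O(1)$.
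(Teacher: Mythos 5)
Your argument is correct, and it takes a noticeably different route from the paper's, although it relies on the same building blocks. The paper does not use Taylor's theorem; it expands $I(P_{\de,Z}+\omega_\de)-I(P_{\de,Z})$ literally into three pieces — the cross term $\de^2\!\int_\Om DP_{\de,Z}D\omega_\de$, the pure gradient term $\tfrac{\de^2}{2}\!\int_\Om|D\omega_\de|^2$, and the difference of $(p+1)$-powers — and bounds each one on its own, obtaining $O(\ep^3/|\ln\ep|^p)$ for the first and third and $O(\ep^4/|\ln\ep|^{p-1})$ for the second. Your Taylor decomposition bundles the first and third into $\langle I'(P_{\de,Z}),\omega_\de\rangle$, and the identity $\langle I'(P_{\de,Z}),\omega_\de\rangle=-\int_\Om l_\de\,\omega_\de$ makes visible a cancellation that the paper's term-by-term estimate does not: the two pieces that are each $O(\ep^3/|\ln\ep|^p)$ in the paper combine to something of order $\|l_\de\|_1\|\omega_\de\|_\infty=O(\ep^4/|\ln\ep|^p)$. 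The way you handle the $H^1$ piece — test the reduced equation $Q_\de L_\de\omega_\de=Q_\de(l_\de+R_\de(\omega_\de))$ against $\omega_\de$ and use $\omega_\de\in E_{\de,Z}$ to kill the projection constants — is exactly what the paper does, only the paper works from the form $-\de^2\Delta\omega_\de=\sum_j\chi_{\Om_j}(P+\omega-\cdots)_+^p-\sum_j(W-a)_+^p+\cdots$ rather than from $L_\de\omega_\de=l_\de+R_\de+\cdots$; the two are algebraically identical. The net effect of your rearrangement is a slightly sharper remainder $O(\ep^4/|\ln\ep|^{p-1})$ in place of the lemma's stated $O(\ep^3/|\ln\ep|^p)$, so the claim is certainly covered. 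One small remark: your final observation about needing the sharp pointwise bound $v_+=O(1/|\ln\ep|)$ in the quadratic nonlinearity overstates the delicacy — even with the crude bound $v_+=O(1)$ that term would come out $O(\ep^4)$, still $o(\ep^3/|\ln\ep|^p)$, so the logarithmic gain there is welcome but not needed for the lemma as stated.
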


\begin{proof}

Recall that

\[
P_{\de,Z}=\sum_{j=1}^m P_{\de,Z,j}.
\]
We have

\[
\begin{split}
K(Z)=& I\bigl( P_{\de,Z}\bigr) +\int_\Om \de^2
DP_{\de,Z}D\omega_\de+\frac{\de^2}{2}\int_\Omega |D\omega_\de|^2\\
&-\frac1{p+1} \sum_{j=1}^m\int_\Om \chi_{\Om_j}\Biggl[ \biggl(
P_{\de,Z} +\omega_\de-\kappa_j-\frac{2\pi q(x)}{|\ln\ep|}
\biggr)_+^{p+1}- \biggl( P_{\de,Z}-\kappa_j-\frac{2\pi
q(x)}{|\ln\ep|} \biggr)_+^{p+1}\Biggr].
\end{split}
\]

Using Proposition~\ref{p33} and \eqref{3.14}, we find

\[
\begin{split}
&  \int_{\Om_j} \Biggl[ \biggl( P_{\de,Z}
+\omega_\de-\kappa_j-\frac{2\pi q(x)}{|\ln\ep|} \biggr)_+^{p+1}-
\biggl(
P_{\de,Z}-\kappa_j-\frac{2\pi q(x)}{|\ln\ep|} \biggr)_+^{p+1}\Biggr]\\
= &\int_{ B_{Ls_{\de,j}}(z_j)} \Biggl[ \biggl( P_{\de,Z}
+\omega_\de-\kappa_j-\frac{2\pi q(x)}{|\ln\ep|} \biggr)_+^{p+1}-
\biggl(
P_{\de,Z}-\kappa_j-\frac{2\pi q(x)}{|\ln\ep|} \biggr)_+^{p+1}\Biggr]\\
=&O\left(\frac{s_{\de,j}^2\|\omega_\de\|_\infty}{|\ln\ep|^{p}}\right)
= O\Bigl(
 \frac{\ep^3}{|\ln\ep|^{p}}\Bigr).
\end{split}
\]

On the other hand,

\[
\begin{split}
&\de^2 \int_\Om DP_{\de,Z}D\omega_\de
 =
 \sum_{j=1}^m\int_\Om
\left(W_{\de,z_j,a_{\de,j}}-a_{\de,j}\right)_+^{p} \omega_\de\\
=&\sum_{j=1}^m
 \int_{\cup_{k=1}^m B_{s_{\de,k}}(z_k)}
(W_{\de,z_j,a_{\de,j}}-a_{\de,j})_+^{p} \omega_\de \\
=& O\Bigl(
 \frac{\ep^3}{|\ln\ep|^{p}}\Bigr).
\end{split}
\]
Finally, we estimate $\de^2\int_\Om |D\omega_\de|^2$.\\
Note that
\[
\begin{split}
-\de^2\Delta\omega_\de=&\sum_{j=1}^m\chi_{\Om_j}\left(P_{\de,Z}+\omega_\de-\kappa_j-\frac{2\pi
q(x)}{|\ln\ep|}\right)_+^{p}
-\sum_{j=1}^m\left(W_{\de,z_j,a_{\de,j}}-a_{\de,j}\right)_+^{p}\\
&+\sum_{j=1}^m\sum_{\bar h=1}^2b_{j,\bar
h}\left(-\de^2\Delta\frac{\partial P_{\de,Z,j}}{\partial z_{j,\bar
h}}\right),
\end{split}
\]
Hence, by \eqref{2.9}, we have
\[
\begin{split}
\de^2\int_\Om|D\omega_\de|^2=&\sum_{j=1}^m\int_{\Om_j}\Biggl[\left(P_{\de,Z}+\omega_\de-\kappa_j-\frac{2\pi
q(x)}{|\ln\ep|}\right)_+^{p}
-\left(W_{\de,z_j,a_{\de,j}}-a_{\de,j}\right)_+^{p}\Biggr]\omega_\de\\
&+\sum_{j=1}^m\sum_{\bar h=1}^2b_{j,\bar
h}\int_\Om\left(-\de^2\Delta\frac{\partial P_{\de,Z,j}}{\partial
z_{j,\bar h}}\right)\omega_\de\\
=&p\sum_{j=1}^m\int_{\Om_j}\left(W_{\de,z_j,a_{\de,j}}-a_{\de,j}\right)_+^{p-1}\left(\frac{s_{\de,j}}{|\ln\ep|}+\omega_\de\right)\omega_\de
+O\left(\sum_{j=1}^m\sum_{\bar h=1}^2\frac{\ep|b_{j,\bar
h}|\|\omega_\de\|_\infty}{|\ln\ep|^{p}}\right)\\
=&O\left(\frac{\ep^4}{|\ln\ep|^{p-1}}\right).
\end{split}
\]

 So we can obtain  that

\[
 K(Z)= I\left(\sum_{j=1}^m P_{\de,Z,j}
\right)+O\left(
 \frac{\ep^3}{|\ln\ep|^{p}}\right).
\]

\end{proof}

\begin{lemma}\label{l43}

 We have

\[
\frac{\partial K(Z)}{\partial z_{i,h}}=\frac{\partial }{\partial
z_{i,h}}I\left(\sum_{j=1}^m P_{\de,Z,j} \right)+O\Bigl(
\frac{\ep^3}{|\ln\ep|^{p-1}}\Bigr).
\]

 \end{lemma}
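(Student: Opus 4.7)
The plan is to differentiate $K(Z)=I(P_{\de,Z}+\omega_\de(Z))$ by the chain rule and show that the extra terms produced by the $Z$-dependence of $\omega_\de$ are of order $\ep^3/|\ln\ep|^{p-1}$. By the chain rule,
\[
\frac{\partial K(Z)}{\partial z_{i,h}}=I'(P_{\de,Z}+\omega_\de)\!\left[\frac{\partial P_{\de,Z}}{\partial z_{i,h}}\right]+I'(P_{\de,Z}+\omega_\de)\!\left[\frac{\partial \omega_\de}{\partial z_{i,h}}\right],
\]
while $\partial I(P_{\de,Z})/\partial z_{i,h}=I'(P_{\de,Z})[\partial P_{\de,Z}/\partial z_{i,h}]$. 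The reduction equation \eqref{3.10} means that $P_{\de,Z}+\omega_\de$ solves \eqref{1} up to a combination of $-\de^2\Delta(\partial P_{\de,Z,j}/\partial z_{j,\bar h})$; hence for every $v\in H_0^1(\Om)$,
\[
I'(P_{\de,Z}+\omega_\de)[v]=\sum_{j,\bar h}c_{j,\bar h}(-\de^2)\int_{\Om}\Delta\frac{\partial P_{\de,Z,j}}{\partial z_{j,\bar h}}\,v,
\]
and the coefficients $c_{j,\bar h}$ can be estimated by testing against $\partial P_{\de,Z,i}/\partial z_{i,\bar h}$ and inverting the nearly diagonal matrix \eqref{3.5}, which (as in the proof of Proposition~\ref{p33}) gives $|c_{j,\bar h}|=O(\ep^2|\ln\ep|^\alpha)$ for a suitable $\alpha$.

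For the piece involving $\partial\omega_\de/\partial z_{i,h}$, I would differentiate the orthogonality $\int\Delta(\partial P_{\de,Z,j}/\partial z_{j,\bar h})\,\omega_\de=0$ in $z_{i,h}$, converting $\int\Delta(\partial P_{\de,Z,j}/\partial z_{j,\bar h})(\partial \omega_\de/\partial z_{i,h})$ into $-\int\Delta(\partial^2 P_{\de,Z,j}/\partial z_{i,h}\partial z_{j,\bar h})\,\omega_\de$, and then combine $\|\omega_\de\|_\infty=O(\de|\ln\de|^{(p-1)/2})$ from Proposition~\ref{p33}, the localization of the support to $\cup_j B_{Ls_{\de,j}}(z_j)$, and the explicit expressions \eqref{2.10}--\eqref{2.11} to bound this contribution well below $\ep^3/|\ln\ep|^{p-1}$. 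For the comparison $I'(P_{\de,Z}+\omega_\de)[\partial P_{\de,Z}/\partial z_{i,h}]-I'(P_{\de,Z})[\partial P_{\de,Z}/\partial z_{i,h}]$, the difference equals
\[
\de^2\int_\Om D\omega_\de\,D\frac{\partial P_{\de,Z}}{\partial z_{i,h}}-\sum_{j}\int_{\Om_j}\!\Big[\Big(P_{\de,Z}+\omega_\de-\kappa_j-\tfrac{2\pi q}{|\ln\ep|}\Big)_+^p-\Big(P_{\de,Z}-\kappa_j-\tfrac{2\pi q}{|\ln\ep|}\Big)_+^p\Big]\frac{\partial P_{\de,Z}}{\partial z_{i,h}}.
\]
I would integrate the first summand by parts using $-\de^2\Delta(\partial P_{\de,Z,i}/\partial z_{i,h})=p(W_{\de,z_i,a_{\de,i}}-a_{\de,i})_+^{p-1}\bigl(\partial W_{\de,z_i,a_{\de,i}}/\partial z_{i,h}-\partial a_{\de,i}/\partial z_{i,h}\bigr)$, Taylor-expand the bracketed difference as $p\chi_{\Om_j}(P_{\de,Z}-\kappa_j-\tfrac{2\pi q}{|\ln\ep|})_+^{p-1}\omega_\de+R_\de(\omega_\de)$ with remainder controlled as in \eqref{3.16}, and invoke the inner expansion \eqref{2.9} (produced by the choice \eqref{2.7}) to see that the leading-order contributions cancel. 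What remains is an $\|\omega_\de\|_\infty^2$-type expression on a support of area $\sim s_{\de,j}^2\sim\ep^2|\ln\ep|^{p-1}$, which fits within $O(\ep^3/|\ln\ep|^{p-1})$.

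The main obstacle is bookkeeping: to reach exactly the exponent $\ep^3/|\ln\ep|^{p-1}$ (rather than a cruder bound) one must carefully track pointwise sizes of $\omega_\de$, $(W_{\de,z_j,a_{\de,j}}-a_{\de,j})_+^{p-1}$, and $\partial P_{\de,Z,i}/\partial z_{i,h}$ separately on $B_{s_{\de,j}}(z_j)$ and on the annulus $B_{Ls_{\de,j}}(z_j)\setminus B_{s_{\de,j}}(z_j)$, using the asymptotics $s_{\de,j}\sim\ep$, $\de\sim\ep/|\ln\ep|^{(p-1)/2}$, and the fact that both $l_\de$ and $R_\de(\omega_\de)$ vanish outside $\cup_j B_{Ls_{\de,j}}(z_j)$. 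The cancellation engineered by the system \eqref{2.6}--\eqref{2.7} through the expansion \eqref{2.9} is what produces the claimed order.
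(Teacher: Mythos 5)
Your plan follows the paper's argument step by step: chain rule; using the reduction equation to express $I'(P_{\de,Z}+\omega_\de)$ as a combination of $-\de^2\Delta\bigl(\partial P_{\de,Z,j}/\partial z_{j,\bar h}\bigr)$ and estimating the coefficients; differentiating the $E_{\de,Z}$-orthogonality for the $\partial\omega_\de/\partial z_{i,h}$ piece; and Taylor expansion of the bracketed difference against the inner expansion \eqref{2.9} together with \eqref{3.16}. One minor arithmetic slip worth flagging: since $s_{\de,j}\sim\de|\ln\de|^{(p-1)/2}$ and $\de=\ep(2\pi/|\ln\ep|)^{(p-1)/2}$, the scale is $s_{\de,j}\sim\ep$, so the support area is $s_{\de,j}^2\sim\ep^2$ (not $\ep^2|\ln\ep|^{p-1}$); the logarithmic powers in the final bound $O(\ep^3/|\ln\ep|^{p-1})$ come instead from $(W_{\de,z_j,a_{\de,j}}-a_{\de,j})_+^{p-2}\sim|\ln\ep|^{-(p-2)}$ and $\partial P_{\de,Z,i}/\partial z_{i,h}\sim(\ep|\ln\ep|)^{-1}$ on that ball.
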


\begin{proof}
First, we have

\begin{equation}\label{4.4}
\begin{split}
&\frac{\partial K(Z)}{\partial z_{i,h}}=\left\langle I^\prime\Bigl(
P_{\de,Z} +\omega_\de\Bigr),\frac{\partial P_{\de,Z}}{\partial
z_{i,h}}+ \frac{\partial
\omega_\de}{\partial z_{i,h}}\right\rangle\\
=&\frac{\partial }{\partial z_{i,h}}I\Bigl(P_{\de,Z} \Bigr)
+\left\langle I^\prime\big( P_{\de,Z}+\omega_\de\big),
\frac{\partial\omega_\de}{\partial z_{i,h}}\right\rangle
\\
&-\sum_{j=1}^m \int_{\Omega_j}\Biggl[
\biggl(P_{\de,Z}+\omega_\de-\kappa_j-\frac{2\pi
q(x)}{|\ln\ep|}\biggr)_+^{p}-\biggl(P_{\de,Z}-\kappa_j-\frac{2\pi
q(x)}{|\ln\ep|}\biggr)_+^{p}\Biggr]\frac{\partial
P_{\de,Z}}{\partial z_{i,h}}.
\end{split}
\end{equation}

Since $\omega_\de\in E_{\de,Z}$, we have
\[
\int_\Om\left(W_{\de,z_j,a_{\de,j}}-a_{\de,j}\right)_+^{p-1}\left(\frac{\partial
W_{\de,z_j,a_{\de,j}}}{\partial z_{j,h}}-\frac{\partial
a_{\de,j}}{\partial z_{j,h}}\right)\omega_\de=0.
\]
Differentiating the above relation with respect to $z_{i, h}$, we
can deduce

\begin{equation*}
\begin{split}
\Bigg\langle & I^\prime\bigl(P_{\de,Z} +\omega_\de\bigr),
\frac{\partial \omega_\de}{\partial z_{i, h}}
\Bigg\rangle=\sum_{j=1}^m\sum_{\bar h=1}^2 b_{j,\bar h}
\int_\Om\left(-\de^2\Delta\frac{\partial P_{\de,Z,j}}{\partial
z_{j,\bar h}}\right) \frac{\partial \omega_\de}{\partial z_{i, h}}\\
=&\sum_{j=1}^m\sum_{\bar h=1}^2 p b_{j,\bar h}
\int_\Om\left(W_{\de,z_j,a_{\de,j}}-a_{\de,j}\right)_+^{p-1}\left(\frac{\partial
W_{\de,z_j,a_{\de,j}}}{\partial z_{j,\bar h}}-\frac{\partial
a_{\de,j}}{\partial z_{j,\bar h}}\right) \frac{\partial
\omega_\de}{\partial z_{i, h}}\\
=&O\left(\sum_{j=1}^m\sum_{\bar h=1}^2 \frac{\ep|b_{j,\bar
h}|}{|\ln\ep|^{p}}\right)=O\left(\frac{\ep^3}{|\ln\ep|^{p-1}}\right).
\end{split}
\end{equation*}
On the other hand, using \eqref{3.16} (for the definition of
$R_\de(\omega)$, see \eqref{3.12}), we obtain

\[
\begin{split}
&\sum_{j=1}^m\int_{\Omega_j}\Biggl[
\biggl(P_{\de,Z}+\omega_\de-\kappa_j-\frac{2\pi
q(x)}{|\ln\ep|}\biggr)_+^{p}-\biggl(P_{\de,Z}-\kappa_j-\frac{2\pi
q(x)}{|\ln\ep|}\biggr)_+^{p}\Biggr]\frac{\partial
P_{\de,Z,i}}{\partial z_{i,h}}\\
= & \sum_{j=1}^m\int_{\Omega_j}\Biggl[
\biggl(P_{\de,Z}+\omega_\de-\kappa_j-\frac{2\pi q(x)}{|\ln\ep|}\biggr)_+^{p}-\biggl(P_{\de,Z}-\kappa_j-\frac{2\pi q(x)}{|\ln\ep|}\biggr)_+^{p}\\
&-p\biggl(P_{\de,Z}-\kappa_j-\frac{2\pi
q(x)}{\kappa|\ln\ep|}\biggr)_+^{p-1}\omega_\de\Biggr]\frac{\partial
P_{\de,Z,i}}{\partial z_{i,h}} +\sum_{j=1}^m
p\int_{\Omega_j}\Biggl[\biggl(P_{\de,Z}-\kappa_j-\frac{2\pi q(x)}{|\ln\ep|}\biggr)_+^{p-1}\\
&-\bigl(W_{\de,z_j,a_{\de,j}}-a_{\de,j}\bigr)_+^{p-1}\Biggr]\frac{\partial
P_{\de,Z,i}}{\partial z_{i,h}}\omega_\de+O\left(\frac{s_{\de,j}^2\|\omega_\de\|_\infty}{|\ln\ep|^{p}}\right)\\
=&\int_\Omega R_\de(\omega_\de)\frac{\partial P_{\de,Z,i}}{\partial
z_{i,h}} +
\sum_{j=1}^mp\int_{\Omega_j}\Biggl[\biggl(P_{\de,Z}-\kappa_j-\frac{2\pi
q(x)}{|\ln\ep|}\biggr)_+^{p-1}-\bigl(W_{\de,z_i,a_{\de,i}}-a_{\de,i}\bigr)_+^{p-1}\Biggr]\frac{\partial
P_{\de,Z,i}}{\partial
z_{i,h}}\omega_\de\\
&+O\left(\frac{\ep^3}{|\ln\ep|^{p}}\right)\\
 =& O\left(\frac{\ep^3}{|\ln\ep|^{p-1}}\right).
\end{split}
\]
Thus, the estimate follows.
\end{proof}

Define

\[
c_{\de,1}=\frac{ C \de^2
 }{\ln \frac{R}{\ep}
 }-\frac{ \pi\de^2\ln\frac1{\varrho}
 }{|\ln\frac{R}{\ep}|^2
 },\quad
c_{\de,2}=\frac{(C+\eta) \de^2
 }{\ln\frac{R}{\ep}
 },
\]
where $\eta>0$ is a small constant and $\varrho>0$ is a fixed small
constant. Let

\[
D=\bigl\{ Z=(z_1,\cdots,z_m): \; z_i\in\Omega_\varrho,
i=1,\cdots,m,\; |z_i-z_j|\ge \varrho^{\bar L}, i\ne j\bigr\},
\]
where $\Omega_\varrho=\bigl\{y: y\in\Omega,\; d(y,\partial\Omega)\ge
\varrho\bigr\}$,  and $\bar L>0$ is a large constant.

Denote $K^c=\bigl\{Z: Z\in D,\; K(Z)\le c\bigr\}$.  Consider

\[
\left\{
 \begin{array}{ll}
\frac{d Z(t)}{dt}=- DK(Z(t)), \; t\ge 0,&\\ \vspace{0.05cm}\\
Z(0)\in K^{c_{\de,2}}.&
\end{array}
\right.
\]

\begin{lemma}\label{l44}
 $Z(t)$ does not leave $D$ before it reaches $K^{c_{\de,1}}$.
\end{lemma}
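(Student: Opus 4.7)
The plan is to combine Lemma~\ref{l42} and Lemma~\ref{l43} with a two-term asymptotic expansion of $I(P_{\de,Z})$ in which the Kirchhoff--Routh function $\mathcal{W}(Z)$ emerges as the coefficient of the $1/(\ln(R/\ep))^{2}$ correction. Starting from the identity $\de^{2}\int_\Om |DP_{\de,Z}|^{2}=\sum_{i,j}\int_\Om(W_{\de,z_i,a_{\de,i}}-a_{\de,i})_+^{p}P_{\de,Z,j}$, using the decomposition~\eqref{2.3}, the expansion~\eqref{2.9}, the Pohozaev identities recorded after~\eqref{2.2}, and the refined $a_{\de,i}$-values in Remark~\ref{remark2.2} (driven by the system~\eqref{2.7}), I would obtain
\[
K(Z)=\frac{C_{0}\,\de^{2}}{\ln(R/\ep)}+\frac{C_{1}\,\de^{2}}{(\ln(R/\ep))^{2}}-\frac{4\pi^{2}\,\de^{2}}{(\ln(R/\ep))^{2}}\,\mathcal{W}(Z)+o\!\left(\frac{\de^{2}}{(\ln(R/\ep))^{2}}\right)
\]
uniformly on $D$, with $C_{0}=\pi\sum_j\kappa_j^{2}$ and $C_{1}$ an explicit $Z$-independent constant, together with a differentiated counterpart supplied by Lemma~\ref{l43}. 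The Robin contribution $\frac{1}{2}\kappa_i^{2}H(z_i,z_i)$ to $\mathcal{W}(Z)$ enters through $g(z_i,z_i)=\ln R-2\pi H(z_i,z_i)$; the pairwise interaction $\kappa_i\kappa_j G(z_i,z_j)$ through $\bar G(z_i,z_j)=2\pi G(z_i,z_j)$; and the $\psi_0$-piece through $q=-\psi_0$. Lemma~\ref{l42} passes the expansion to $K(Z)$ since its error $O(\ep^{3}/|\ln\ep|^{p})$ is much smaller than $\de^{2}/(\ln(R/\ep))^{2}$ under $\de\sim\ep|\ln\ep|^{(p-1)/2}$.

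Next I would analyse the two pieces of $\partial D$ separately. On $\{d(z_i,\partial\Om)=\varrho\}$, standard Robin-function asymptotics give $|\nabla_{z_i}H(z_i,z_i)|\ge c/\varrho$ with gradient pointing into $\Om$, so $\partial_{\nu_{\mathrm{out}}}\mathcal{W}(Z)<-c'/\varrho$, where $\nu_{\mathrm{out}}$ is the outward unit normal to $\Om_\varrho$ at $z_i$; differentiating the expansion and absorbing the perturbation via Lemma~\ref{l43}, $-DK(Z)\cdot\nu_{\mathrm{out}}<0$ at every point of this piece, i.e., the gradient flow points strictly into $D$ there. On $\{|z_i-z_j|=\varrho^{\bar L}\}$, $G(z_i,z_j)\ge\frac{\bar L}{2\pi}|\ln\varrho|-O(1)$ forces $\mathcal{W}(Z)\ge\frac{\bar L\kappa_i\kappa_j}{2\pi}|\ln\varrho|-O(1)$; choosing $\bar L$ large enough so that $2\pi\bar L\kappa_i\kappa_j|\ln\varrho|$ exceeds $\pi\ln(1/\varrho)+O(1)$, the expansion yields $K(Z)\le c_{\de,1}$ on this piece.

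Let $t^{*}$ be the first time at which the flow meets $\partial D$. If $Z(t^{*})$ lies on the boundary-approach piece, the inward-pointing estimate contradicts $Z(t^{*})$ being an exit point, which is impossible; hence $Z(t^{*})$ lies on the coalescence piece, and the second estimate gives $K(Z(t^{*}))\le c_{\de,1}$. Consequently the flow has reached $K^{c_{\de,1}}$ by time $t^{*}$, which is the claim. The main obstacle is the two-term expansion of $K(Z)$ with the correct coefficient $-4\pi^{2}$ in front of $\mathcal{W}(Z)$: extracting it requires careful bookkeeping of the algebraic coupling~\eqref{2.7} between $a_{\de,i}$ and $s_{\de,i}$, of the two-parameter scaling $\de\sim\ep|\ln\ep|^{(p-1)/2}$, and of the refinements in Remark~\ref{remark2.2}; a differentiated version is then needed near $\partial\Om_\varrho$, and any sign or scale error would swap the roles of the two pieces of $\partial D$ and break the argument.
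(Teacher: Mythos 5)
Your overall strategy — split $\partial D$ into the \emph{coalescence} piece $\{|z_i-z_j|=\varrho^{\bar L}\}$ where the value $K(Z)$ already drops below $c_{\de,1}$, and the \emph{boundary-approach} piece $\{d(z_i,\partial\Om)=\varrho\}$ where $-DK$ points strictly into $D$ — is the same as the paper's, and your reformulation in terms of the Kirchhoff--Routh function $\mathcal{W}$ (via $\Phi(Z)=-4\pi^2\mathcal{W}(Z)+\sum_i\pi\kappa_i^2\ln R$, with $g(z_i,z_i)=\ln R-2\pi H(z_i,z_i)$, $\bar G=2\pi G$, $q=-\psi_0$) is consistent with what the paper does in the proof of Theorem~\ref{th3}. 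Your handling of the coalescence piece is also essentially the paper's: a single large pairwise term $\bar G(z_i,z_j)\gtrsim\bar L\ln(1/\varrho)$ pushes $K$ below $c_{\de,1}$ once $\bar L$ dominates the $O(\ln(1/\varrho))$ contribution from the Robin terms. (Mind that the ``$-O(1)$'' in your inequality $\mathcal{W}(Z)\ge\frac{\bar L\kappa_i\kappa_j}{2\pi}|\ln\varrho|-O(1)$ is really $-O(\ln(1/\varrho))$ coming from $H(z_i,z_i)$; this is harmless because $\bar L$ can be taken large relative to the multiplicity $m$.)

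The genuine gap is in the boundary-approach piece. You assert that ``standard Robin-function asymptotics give $|\nabla_{z_i}H(z_i,z_i)|\ge c/\varrho$ with gradient pointing into $\Om$, so $\partial_{\nu_{\mathrm{out}}}\mathcal{W}(Z)<-c'/\varrho$.'' The first half is correct, but the conclusion does not follow, because you have ignored the pairwise gradients. For $j\ne i$ one has
\[
\nabla_{z_i}G(z_i,z_j)=-\frac{1}{2\pi}\,\frac{z_i-z_j}{|z_i-z_j|^2}+\nabla_{z_i}H(z_i,z_j),
\]
and in $D$ the only lower bound on $|z_i-z_j|$ is $\varrho^{\bar L}$, so the singular part can be as large as $\varrho^{-\bar L}\gg 1/\varrho$ (recall $\bar L$ is large). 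A priori this term could overwhelm the Robin contribution and flip the sign of $\partial_{\nu_{\mathrm{out}}}\mathcal{W}$. The paper closes exactly this gap with a geometric observation: since $z_i\in\partial\Om_\varrho$, $z_j\in\Om_\varrho$, and $\partial\Om_\varrho$ is a smooth surface, if $\bigl\langle \tfrac{z_i-z_j}{|z_i-z_j|},n\bigr\rangle<0$ then in fact $\bigl\langle \tfrac{z_i-z_j}{|z_i-z_j|},n\bigr\rangle=O(|z_i-z_j|)$ (quadratic tangency), so the dangerous term $\tfrac{1}{|z_i-z_j|}\bigl\langle \tfrac{z_i-z_j}{|z_i-z_j|},n\bigr\rangle$ is bounded below by $-C$ independently of $|z_i-z_j|$; when the inner product is nonnegative this term only helps. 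A companion step is also needed for the regular part $\partial_n g(z_j,z_i)$, which is handled via the reflected point $\bar z_i$: one shows $\bigl\langle \tfrac{\bar z_i-z_j}{|\bar z_i-z_j|},n\bigr\rangle\ge 0$ when $|z_j-z_i|\le M\varrho$, and for $|z_j-z_i|>M\varrho$ this contribution is $O(1/(M\varrho))$, which is absorbed once $M$ is large. Without these two observations the inward-pointing estimate on $\{d(z_i,\partial\Om)=\varrho\}$ is not established, and the proof does not go through.
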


\begin{proof}
Note that
\begin{equation}\label{4.5}
h(x,z)=\frac{1}{2\pi}\ln\frac{1}{|x-\bar{z}|}+o(1),\quad
\frac{\partial h(x,z)}{\partial
n}=-\frac{1}{2\pi|x-\bar{z}|}\left\langle\frac{x-\bar{z}}{|x-\bar{z}|},n\right\rangle+o(1),
\end{equation}
if $z$ is close to $\partial\Omega$, where $n$ is the outward normal
unit vector of  $\partial\{x: x\in\Omega, d(x,
\partial\Omega)\leq d(z, \partial\Omega)\}$ and $\bar{z}$ is the
reflection point of $z$ with respect to $\partial\Omega$.

 Suppose that there is $t_0>0$, such that
$Z(t_0)=:(z_1,\cdots,z_m)\in
\partial D$.

\begin{enumerate}
  \item  Suppose that  there are $i, j\in\{1,2,\cdots,m\}$, such that  $i\neq j$ and $|z_i-z_j|=\varrho^{\bar
L}$.

Since $d(\bar{z},\partial\Omega)\geq \varrho$ and
$\bar{z}\notin\Omega$, using \eqref{4.5}, we get $|h(z_j,z_j)|\le
C^\prime\ln\frac1{\varrho}$ for any $i, j$, where $C^\prime>0$.
Thus, we have

\[
\bar G(z_i,z_j)\ge
\ln\frac1{|z_i-z_j|}-C^\prime\ln\frac1{\varrho}\ge \bar
L\ln\frac1{\varrho}-C^\prime\ln\frac1{\varrho}.
\]
Then, by Lemma \ref{l42} and Proposition \ref{ap1}, we have
\[
K(Z)\le \frac{C \de^2
 }{\ln \frac{R}{\ep}
 }+ \frac{ mC^\prime \de^2\ln\frac1\varrho
 }{(\ln\frac{R}{\ep})^2}-\frac{ \bar L \de^2\ln\frac1\varrho\
 }{(\ln\frac{R}{\ep})^2}<c_{\de,1},
 \]
if $\bar L>0$ is large.\\
  \item  Suppose that  there is $i$, such that $z_i\in\partial\Omega_\varrho$.

Let $n$ be the outward unit normal of $\partial\Omega_\varrho$ at
$z_i$. We have

\[
\frac{\partial \bar G(z_j,z_i)}{\partial
n}=-\frac{1}{|z_j-z_i|}\left\langle
\frac{z_i-z_j}{|z_i-z_j|},n\right\rangle-\frac{\partial
g(z_j,z_i)}{\partial n},
\]
where $n$ is the outward normal unit vector of
$\partial\Omega_\varrho$ at $z_i$.

On the other hand, if $z_j\in\Omega_\varrho$, $j\ne i$,  satisfies

\[
\left\langle \frac{z_i-z_j}{|z_i-z_j|},n\right\rangle<0,
\]
then,

\[
\left\langle \frac{z_i-z_j}{|z_i-z_j|},n\right\rangle= O(|z_i-z_j|).
\]
So, we obtain

\[
\left\langle \frac{z_i-z_j}{|z_i-z_j|},n\right\rangle\ge
-C|z_i-z_j|, \quad\forall\; j\ne i.
\]
As a result, by Lemma \ref{l43} and Proposition \ref{ap2}, we have

\[
\begin{split}
\frac{\partial K}{\partial n}\ge &
\frac{4\pi^2\de^2\kappa_i}{|\ln\ep||\ln\frac{R}{\ep}|}\frac{\partial
q(z_i)}{\partial n}+
\frac{2\pi\de^2\kappa_i^2}{(\ln\frac{R}{\ep})^2}\frac{\partial
g(z_i,z_i)}{\partial n}\\
 &+\sum_{j\ne i}^m \frac{2\pi\de^2\kappa_i\kappa_j}{(\ln\frac{R}{\ep})^2}\frac{\partial  g(z_j,z_i)}{\partial
 n}-\frac{C
\de^2}{(\ln\frac{R}{\ep})^2}.
\end{split}
\]
On the other hand,  we derive from \eqref{4.5}

\[
\frac{\partial g(z_i,z_i)}{\partial n}=\frac{1+o(1)}{2\varrho},
\]
and

\[
\frac{\partial g(z_j,z_i)}{\partial
 n}=\frac{1+o(1)}{|\bar z_i-z_j|}\bigl\langle \frac{\bar z_i-z_j}{|\bar
 z_i-z_j|},n\bigr\rangle,
 \]
 where $\bar z_i$ is the reflection point of $z_i$ with respect to
 $\partial\Omega$.

It is easy to check that if $|z_j-z_i|\le M\varrho$, where $M>0$ is
a fixed large constant, then

\[
\left\langle \frac{\bar z_i-z_j}{|\bar
 z_i-z_j|},n\right\rangle\ge 0.
 \]
So

\[
\frac{\partial K}{\partial n}\ge \frac{2\pi
\de^2}{(\ln\frac{R}{\ep})^2}\Bigl(
\frac{\kappa_i^2+o(1)}{2\varrho}-\frac{\kappa_i\kappa_j+o(1)}{M\varrho}-C\Bigr)>0.
\]
\end{enumerate}

Therefore, the flow does not leave $D$.
\end{proof}

\begin{proof}[Proof of Theorem~\ref{th1}]

We will prove that $K(Z)$ has a critical point in
$K^{c_{\de,2}}\setminus K^{c_{\de,1}}$.

Suppose that $K(Z)$ has no critical point in $K^{c_{\de,2}}\setminus
K^{c_{\de,1}}$. Then from Lemma~\ref{l44} that $K^{c_{\de,1}}$ is a
deformation retract of $K^{c_{\de,2}}$.

It is easy to see that  $K^{c_{\de,2}}=D$ and

\[
\bigl\{Z:  Z\in D,  |z_i-z_j|=\varrho^{\bar L}, \;\text{for some}\;
i\ne j\bigr\}\subset K^{c_{\de,1}}.
\]

On the other hand, take $R$ large enough such that
$\inf\limits_{\Omega}q\ge
-\sum_{i=1}^m\frac{(p-1)\kappa_i^2}{16m\pi\min_l\{\kappa_l\}}-\sum_{i=1}^m\frac{\kappa_i^2g(z_j,z_j)}{4m\pi\min_l\{\kappa_l\}}$,
then     $K(Z)\le c_{\de,1}$ implies that

\[
-\sum_{j\ne i}^m \frac{\pi\de^2\kappa_i\kappa_j \bar
G(z_j,z_i)}{(\ln\frac{R}{\ep})^2}\le
-\frac{\pi\de^2\ln\frac1{\varrho}}{|\ln\ep||\ln\frac{R}{\ep}|},
\]
which implies that there are $i\ne j$, such that

\[
\bar G(z_j,z_i)\ge c'\ln\frac1{\varrho},\quad\text{where}~~c'>0
~~\text{is a constant}.
\]
So, there is a $\alpha>0$, independent of $\delta$, such that

\[
|z_i-z_j|\le \varrho^\alpha.
\]
Therefore,

\begin{equation}\label{4.6}
\begin{split}
& \bigl\{Z:  Z\in D,  |z_i-z_j|=\varrho^{\bar L}, \;\text{for
some}\; i\ne
j\bigr\}\\
&\subset K^{c_{\de,1}}\subset \bigl\{Z:  Z\in D, |z_i-z_j|\le
\varrho^\alpha, \;\text{for some}\; i\ne j\bigr\}.
\end{split}
\end{equation}
Filling the hole $D^*=: \bigl\{Z:  Z\in D,  |z_i-z_j|=\varrho^{\bar
L},
 \;\text{for some}\; i\ne
j\bigr\}$ in $D$, we obtain

\begin{equation}\label{4.7}
\begin{split}
& \bigl\{Z:  z_i\in\Omega_\varrho,
 |z_i-z_j|\le \varrho^{\bar L}, \;\text{for some}\; i\ne
j\bigr\}\\
&\subset K^{c_{\de,1}}\cup D^* \subset \bigl\{Z:
z_i\in\Omega_\varrho,
 |z_i-z_j|\le
\varrho^\alpha, \;\text{for some}\; i\ne j\bigr\}.
\end{split}
\end{equation}

Since $K^{c_{\de,1}}$ is a deformation retract of $K^{c_{\de,2}}$,
we find that $K^{c_{\de,1}}\cup D^*$ is a deformation retract of
$K^{c_{\de,2}} \cup D^*$. On the other hand,
 $\bigl\{Z:  z_i\in\Omega_\varrho,  z_i=z_j, \;\text{for some}\; i\ne
j\bigr\}$ is a  deformation retract of $\bigl\{Z:
z_i\in\Omega_\varrho,
 |z_i-z_j|\le
\varrho^\alpha, \;\text{for some}\; i\ne j\bigr\}$ if $\varrho>0$ is
small. Using \eqref{4.7}, we see that

\[
\bigl\{Z:  z_i\in\Omega_\varrho,  z_i=z_j, \;\text{for some}\; i\ne
j\bigr\}
\]
is a deformation retract of
\[
\underbrace{\Omega_\varrho\times\cdots\times \Omega_\varrho}_m=
K^{c_{\de,2}} \cup D^*.
\]
This is impossible if $\Omega$ has nontrivial homology.

Thus we get a solution $w_\de$ for \eqref{1}. Let
$u_\ep=\frac{|\ln\ep|}{2\pi}w_\de,~\de=\ep\left(\frac{|\ln\ep|}{2\pi}\right)^{\frac{1-p}{2}}$,
it is not difficult to check that  $u_\ep$ has all the properties
listed in Theorem \ref{th1} and thus the proof of Theorem \ref{th1}
is complete.

\end{proof}

\begin{remark}\label{r44}
In the proof of Theorem \ref{th1}, what we actually need is that  the
following function
\[
\Phi(Z)=\sum_{i=1}^m4\pi^2\kappa_iq(z_i) +\sum_{i=1}^m
\pi\kappa_i^2g(z_i,z_i)-\sum_{j\neq
i}^m\pi\kappa_i\kappa_j\bar{G}(z_j,z_i)
\]
as well as its small perturbation (in a suitable sense) has a
critical point in $D$. Moreover, using the  estimates as in Lemma
\ref{l43}, it is easy to see that if
$\sum_{j=1}^mP_{\de,Z_{\de},j}(x)+\omega_\de$ is a solution of
\eqref{1}, and $Z_\de\rightarrow Z_0$ as $\de\rightarrow0$, then
$Z_0$ is a critical point of $\Phi(Z)$.
\end{remark}

\begin{proof}[Proof of Theorem~\ref{th3}]
Note that the Kirchhoff--Routh function associated to the vortex
dynamics is
$$\mathcal{W}(x_1,\cdots,x_m)=\frac{1}{2}\sum_{i\neq
j}^m\kappa_i\kappa _jG(x_i,x_j)+\frac{1}{2}\sum^{m}_{i=1}\kappa^2
_iH(x_i,x_i)+\sum^{m}_{i=1}\kappa_i\psi_0(x_i). $$

 Recall that
$h(z_i,z_j)=-H(z_i,z_j)$, it is easy to check that
\[
\Phi(Z)=-4\pi^2\mathcal {W}(Z)+\sum_{i=1}^m\pi\kappa_i^2\ln R.
\]
Hence, $\Phi(Z)$ and $\mathcal{W}(Z)$ possess the same critical
points.

By Lemma \ref{l42}, \ref{l43} and Proposition \ref{ap1}, \ref{ap2},
we have
 \[
 K(Z)= \frac{ C\de^2
 }{\ln\frac{R}{\ep}
 } +\sum_{i=1}^m\frac{\pi(p-1)\de^2\kappa_i^2}{4(\ln\frac{R}{\ep})^2}+\frac{\de^2}{|\ln\ep|^2}\Phi(Z)
+ O\left(\frac{\de^2\ln|\ln\ep|}{|\ln\ep|^3} \right)
\]
and
\[
\frac{\partial K(Z)}{\partial
z_{i,h}}=\frac{\de^2}{|\ln\ep|^2}\frac{\partial \Phi(Z)}{\partial
z_{i,h}}+O\left(\frac{\de^2\ln|\ln\ep|}{|\ln\ep|^3} \right).
\]
Thus, stable critical point of Kirchhoff-Routh function
$\mathcal{W}(Z)$ implies that  $K(Z)$ has a critical point. So the
result follows.
\end{proof}

Now we are in the position to prove Theorem~\ref{nth1}.
\begin{proof}[Proof of Theorem~\ref{nth1}]

By Theorem \ref{th3}, we obtain that $u_\ep$ is a solution to
\eqref{0}.

Set
 \[
\mathbf{v}_\ep=(\nabla
(u_\ep-q))^\bot,~\omega_\ep=\nabla\times\mathbf{v}_\ep,
\]
\[
P_\ep=\sum_{j=1}^m\frac{1}{p+1}\chi_{\Om_j}\left(u_\ep-q-\frac{\kappa_j|\ln\ep|}{2\pi}\right)_+^{p+1}-\frac{1}{2}|\nabla
(u_\ep-q)|^2.
\]
then  $(\mathbf{v}_\ep, P_\ep)$ forms a stationary solution for
problem \eqref{1.2}.

We now just need to verify
\[ \int_\Omega
\omega_\ep\rightarrow \sum_{j=1}^m\kappa_j,
~~\text{as}~~\ep\rightarrow0.
\]

By direct calculations, we find that
\[
\begin{split}
\int_\Omega\omega_\ep&=\sum_{j=1}^m\frac{1}{\ep^2}\int_\Omega\chi_{\Om_j}
\left(u_\ep-q-\frac{\kappa_j|\ln\ep|}{2\pi}\right)_+^p\\
&=\sum_{j=1}^m\frac{|\ln\ep|^p}{(2\pi)^p \ep^2}\int_{\Omega_j}
\left(w_\de-\kappa_j-\frac{2\pi q}{|\ln\ep|}\right)^p_+\\
&=\sum_{j=1}^m\frac{|\ln\ep|^p}{(2\pi)^p\ep^2}\int_{B_{Ls_{\de,j}(z_j)}}
\left(W_{\de,z_j,a_{\de,j}}-a_{\de,j}+O\Big(\frac{s_{\de,j}}{|\ln\ep|}\Big)\right)^p_+\\
&=\sum_{j=1}^m\frac{ s_{\de,j}^2|\ln\ep|^p}{(2\pi)^p\ep^2}\left(\frac{\de}{s_{\de,j}}\right)^{\frac{2p}{p-1}}\int_{B_1(0)}\phi^p+o(1)\\
&=\sum_{j=1}^m\frac{ a_{\de,j}
|\ln\ep|}{\ln\frac{R}{s_{\de,j}}}+o(1)\\
&\rightarrow \sum_{j=1}^m\kappa_j, \quad \text{as}~~\ep\rightarrow0.
\end{split}
\]
Therefore, the result follows.
\end{proof}

\begin{remark}
To regularize point vortices with equi-strength $\kappa$, we  do not
need $\chi_{\Om_j}$, that is, we just need to consider the following
problem
\[
 \begin{cases}
-\ep^2 \Delta u=(u-q-\frac{\kappa}{2\pi}\ln\frac{1}{\ep})_+^p, \quad & x\in\Omega, \\
u=0, \quad & x\in\partial\Omega.
\end{cases}
\]

\end{remark}

{\bf Acknowledgements:} D. Cao and Z. Liu were supported by the
National Center for Mathematics and Interdisciplinary Sciences, CAS.
D. Cao and J. Wei were also supported by CAS Croucher Joint
Laboratories Funding Scheme.

\appendix

\section{Energy expansion}

In this section we will give precise expansions of
$I\left(\sum_{j=1}^m P_{\de,Z,j}\right)$ and $\frac{\partial
}{\partial z_{i,h}} I\left(\sum_{j=1}^m P_{\de,Z,j}\right)$, which
have been used in section 4.



We always assume that

\[
d(z_j,\partial\Om)\ge \varrho>0,~~~|z_i-z_j|\ge \varrho^{\bar
L},\quad i\ne j,
\]
for some small $\varrho>0$ and large $\bar L>0$.

\begin{lemma}\label{al1}
For $x\in\Om_i,~i=1,\cdots,m$, we have
\[
P_{\de,Z}(x)>\kappa_i+\frac{2\pi q(x)}{|\ln\ep|},\quad x\in
B_{s_{\de,i}(1-Ts_{\de,i})}(z_i),\;
\]
where $T>0$ is a large constant; while

\[
P_{\de,Z}(x)<\kappa_i+\frac{2\pi q(x)}{|\ln\ep|},\quad
x\in\Om_i\setminus B_{s_{\de,i}(1+s_{\de,i}^{\sigma})}(z_i),
\]
where $\sigma>0$ is a small constant.
\end{lemma}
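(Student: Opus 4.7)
The plan is to use the local expansion \eqref{2.9} in the regime $|x-z_i|\le L s_{\de,i}$ and control the term $W_{\de,z_i,a_{\de,i}}(x)-a_{\de,i}$ via Taylor expansion of $\phi$ near $r=1$, then treat the complementary regime $|x-z_i|\ge L s_{\de,i}$ by writing $PW_{\de,z_j,a_{\de,j}}$ in terms of the Green function $G$.

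First, recall from \eqref{2.6} that $\de^{2/(p-1)}s_{\de,i}^{-2/(p-1)}=a_{\de,i}/\bigl(|\phi'(1)|\ln(R/s_{\de,i})\bigr)$, and $s_{\de,i}\sim \de|\ln\de|^{(p-1)/2}$, so that $1/\ln(R/s_{\de,i})=O(1/|\ln\ep|)$. Using this together with $\phi(1)=0$, $\phi'(1)<0$, a Taylor expansion at $r=1$ gives, for $|x-z_i|=s_{\de,i}(1\pm\theta)$ with $\theta=o(1)$,
\[
W_{\de,z_i,a_{\de,i}}(x)-a_{\de,i}
=\de^{2/(p-1)}s_{\de,i}^{-2/(p-1)}\phi(1\mp\theta)
=\pm\frac{a_{\de,i}\,\theta}{\ln(R/s_{\de,i})}\bigl(1+o(1)\bigr),
\]
in the inside case, with the analogous logarithmic expression $W-a=-a_{\de,i}\ln(1+\theta)/\ln(R/s_{\de,i})$ giving the same leading order $-a_{\de,i}\theta/\ln(R/s_{\de,i})$ in the outside case. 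Meanwhile, formula \eqref{2.9} asserts that on $B_{Ls_{\de,i}}(z_i)$ the error between $P_{\de,Z}(x)-\kappa_i-\frac{2\pi q(x)}{|\ln\ep|}$ and $W_{\de,z_i,a_{\de,i}}(x)-a_{\de,i}$ is a linear form in $x-z_i$ of size $O(s_{\de,i}/|\ln\ep|)$ plus an $O(s_{\de,i}^2/|\ln\ep|)$ remainder.

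For the first assertion, take $\theta=Ts_{\de,i}$ with $T$ a large constant. Then the leading term is $a_{\de,i}Ts_{\de,i}/\ln(R/s_{\de,i})$, of size $\gtrsim T\kappa_i s_{\de,i}/|\ln\ep|$, which for $T$ large dominates the $O(s_{\de,i}/|\ln\ep|)$ error from \eqref{2.9}. This yields $P_{\de,Z}(x)>\kappa_i+2\pi q(x)/|\ln\ep|$ on $\partial B_{s_{\de,i}(1-Ts_{\de,i})}(z_i)$; inside this ball $\phi(|x-z_i|/s_{\de,i})$ is only larger (since $\phi$ is radially decreasing on $[0,1]$), so the inequality persists. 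For the second assertion at $|x-z_i|=s_{\de,i}(1+s_{\de,i}^\sigma)$, taking $\theta=s_{\de,i}^\sigma$ with $0<\sigma<1$, the leading term is $-a_{\de,i}s_{\de,i}^\sigma/\ln(R/s_{\de,i})$ of magnitude $\sim\kappa_i s_{\de,i}^\sigma/|\ln\ep|$, which beats the $O(s_{\de,i}/|\ln\ep|)$ error as soon as $\de$ is small, giving $P_{\de,Z}(x)<\kappa_i+2\pi q(x)/|\ln\ep|$.

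The remaining step, and the main technical obstacle, is to extend the second inequality from the annulus $s_{\de,i}(1+s_{\de,i}^\sigma)\le|x-z_i|\le Ls_{\de,i}$ to all of $\Om_i\setminus B_{Ls_{\de,i}}(z_i)$, since \eqref{2.9} is only proven there. For this I would use that outside $B_{s_{\de,i}}(z_i)$ one has $W_{\de,z_i,a_{\de,i}}(x)-\frac{a_{\de,i}}{\ln(R/s_{\de,i})}g(x,z_i)=\frac{2\pi a_{\de,i}}{\ln(R/s_{\de,i})}G(x,z_i)$, so
\[
P_{\de,Z}(x)=\sum_{j=1}^m\frac{2\pi a_{\de,j}}{\ln(R/s_{\de,j})}G(x,z_j),
\qquad|x-z_i|\ge s_{\de,i}.
\]
Since $G(\cdot,z_j)$ is bounded on $\Om\setminus B_{Ls_{\de,i}}(z_i)$ uniformly in $\de$ for $j\ne i$, the $j\ne i$ contributions are $O(1/|\ln\ep|)$. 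For the $j=i$ term, $G(x,z_i)=\frac{1}{2\pi}\ln\frac{1}{|x-z_i|}+H(x,z_i)$, and for $|x-z_i|\ge Ls_{\de,i}$ with $L$ a large fixed constant, $\frac{a_{\de,i}\ln(1/|x-z_i|)}{\ln(R/s_{\de,i})}\le a_{\de,i}-\frac{a_{\de,i}\ln L}{\ln(R/s_{\de,i})}$. Using \eqref{r2.2.2} to expand $a_{\de,i}-\kappa_i-\frac{2\pi q(z_i)}{|\ln\ep|}$ and collecting terms, the coefficient $-\frac{a_{\de,i}\ln L}{\ln(R/s_{\de,i})}\sim -\kappa_i\ln L/|\ln\ep|$ dominates all the $O(1/|\ln\ep|)$ contributions once $L$ is chosen sufficiently large, yielding $P_{\de,Z}(x)<\kappa_i+2\pi q(x)/|\ln\ep|$ on the entire far region. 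Combining the two annular estimates finishes the proof.
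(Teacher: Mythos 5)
Your proof is essentially correct and shares the near-field part of the paper's argument (Taylor expansion of $\phi$ near $r=1$ and the expansion \eqref{2.9}), but it handles the far field differently. The paper sets the far-field boundary at the slowly shrinking scale $|x-z_i|=s_{\de,i}^{\tilde\sigma}$ (with $\sigma<\tilde\sigma<1$): for $|x-z_i|\ge s_{\de,i}^{\tilde\sigma}$ it uses the crude bound $\ln\frac{R}{|x-z_i|}/\ln\frac{R}{s_{\de,i}}\le \tilde\sigma+o(1)<1$, so the $j=i$ term is below $\kappa_i\tilde\sigma$ and negativity is immediate; the remaining intermediate annulus $s_{\de,i}(1+Ts_{\de,i}^{\tilde\sigma})\le|x-z_i|\le s_{\de,i}^{\tilde\sigma}$ is then treated by re-deriving an \eqref{2.9}-type expansion with the enlarged error $O(s_{\de,i}^{\tilde\sigma}/|\ln\ep|)$. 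Your far field instead begins at the fixed multiple $Ls_{\de,i}$, where the log-ratio is $1-O(1/|\ln\ep|)$ rather than uniformly below $1$, so the coarse estimate is useless; what saves you is the \emph{fine} cancellation built into the defining relation \eqref{2.7} (reflected in \eqref{r2.2.2}), which shows $a_{\de,i}-\kappa_i-\frac{2\pi q(x)}{|\ln\ep|}=O(1/|\ln\ep|)$, and the extra $-\frac{a_{\de,i}\ln L}{\ln(R/s_{\de,i})}$ then wins for $L$ large. The trade-off: your decomposition keeps \eqref{2.9} on its natural domain $B_{Ls_{\de,i}}(z_i)$ and avoids re-estimating its error on a larger ball, at the cost of needing the sharp choice of $a_{\de,i}$ rather than a soft scaling argument; the paper's route is softer in the far field but must stretch \eqref{2.9}. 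Two small points to tighten: (i) in the annulus $s_{\de,i}(1+s_{\de,i}^\sigma)\le|x-z_i|\le Ls_{\de,i}$ you only verify the inner sphere --- you should note that $-a_{\de,i}\ln(|x-z_i|/s_{\de,i})/\ln(R/s_{\de,i})$ is monotone decreasing in $|x-z_i|$ while the error from \eqref{2.9} stays $O(s_{\de,i}/|\ln\ep|)$ uniformly on this annulus, so the bound propagates; and (ii) the boundedness of $G(\cdot,z_j)$ for $j\ne i$ requires $|x-z_j|$ bounded below, which uses that $x\in\Om_i$ stays away from $\Om_j$ (the same implicit separation of the subdomains that the paper also uses).
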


\begin{proof}

Suppose that $x\in B_{s_{\de,i}(1-Ts_{\de,i})}(z_i)$. It follows
from \eqref{2.9} and $\phi_1'(s)<0$ that
\[
\begin{split}
& P_{\de,Z}(x)-\kappa_i-\frac{2\pi q(x)}{|\ln\ep|}=  W_{\de,z_i,a_{\de,i}}(x)-a_{\de,i}+O\left(\frac{s_{\de,i}}{|\ln\ep|}\right)\\
=&
\frac{a_{\de,i}}{|\phi'(1)||\ln\frac{R}{s_{\de,i}}|}\phi\Bigl(\frac{|x-z_i|}{s_{\de,i}}\Bigr)+O\Bigl(\frac{\ep}{|\ln\ep|}\Bigr)>0,
\end{split}
\]
if $T>0$ is large.   On the other hand, if $x\in\Om_i\setminus
 B_{s_{\de,i}^{\tilde\sigma}}(z_i)$, where
$\tilde\sigma>\sigma>0$ is a fixed small constant,  then
\[
\begin{split}
& P_{\de,Z}(x)-\kappa_i-\frac{2\pi q(x)}{|\ln\ep|}= \sum_{j=1}^m
a_{\de,j}\ln\frac R{|x-z_j|}/\ln\frac R{s_{\de,j}} -\kappa_i
-\frac{2\pi q(x)}{|\ln\ep|}+o(1)\\
\le &  C\tilde \sigma -\kappa_i+o(1)<0.
\end{split}
\]

Finally,  if $x\in B_{s_{\de,i}^{\tilde\sigma}}(z_i)\setminus
B_{s_{\de,i}(1+Ts_{\de,i}^{\tilde \sigma})}(z_i)$ for some $i$, then
\[
\begin{split}
& P_{\de,Z}(x)-\kappa_i-\frac{2\pi q(x)}{|\ln\ep|}=  W_{\de,z_i,a_{\de,i}}(x)-a_{\de,i}+O\left(\frac{s_{\de,i}^{\tilde\sigma}}{\ln\frac{R}{s_{\de,i}}}\right)\\
=& a_{\de,i}\frac{\ln\frac
R{|x-z_i|}}{\ln\frac{R}{s_{\de,i}}}-a_{\de,i}+O\left(\frac{s_{\de,i}^{\tilde\sigma}}{\ln\frac{R}{s_{\de,i}}}\right)\\
\le & -a_{\de,i}\frac{\ln(1+Ts_{\de,i}^{\tilde\sigma})}{\ln\frac
{R}{s_{\de,i}}}+O\left(\frac{s_{\de,i}^{\tilde\sigma}}{\ln\frac{R}{s_{\de,i}}}\right)
 <0,
\end{split}
\]
if $T>0$ is large. Note that by the choice of $\tilde\sigma$,
$B_{s_{\de,i}(1+s_{\de,i}^\sigma)}(z_i)\supset
B_{s_{\de,i}(1+Ts_{\de,i}^{\tilde\sigma})}(z_i)$ for small $\de$. We
therefore derive our conclusion.
\end{proof}

\begin{proposition}\label{ap1}

We have

\[
\begin{split}
I\left( P_{\de,Z}\right)=&\frac{ C\de^2
 }{\ln\frac{R}{\ep}
 } +\sum_{i=1}^m\frac{\pi(p-1)\de^2\kappa_i^2}{4(\ln\frac{R}{\ep})^2}
 +\sum_{i=1}^m\frac{4\pi^2\de^2\kappa_iq(z_i)}{|\ln\ep||\ln\frac{R}{\ep}|}+
 \sum_{i=1}^m \frac{\pi\de^2\kappa_i^2 g(z_i,z_i)}{(\ln\frac{R}\ep)^2}
 \\
 &-\sum_{j\ne i}^m \frac{\pi\de^2\kappa_i\kappa_j \bar  G(z_j,z_i)}{{(\ln\frac{R}\ep)^2}}
+ O\left(\frac{\de^2\ln|\ln\ep|}{|\ln\ep|^3} \right),
 \end{split}
\]
where $C$ is a positive constant.

\end{proposition}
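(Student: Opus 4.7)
The plan is to expand $I(P_{\de,Z})$ by separately handling the quadratic and nonlinear terms, exploiting the radial symmetry of each $W_{\de,z_j,a_{\de,j}}$ around $z_j$, and then substituting the expansions of $a_{\de,i}$ and $s_{\de,i}$ from Remark~\ref{remark2.2}.

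First, using $-\de^2\Delta P_{\de,Z,j}=(W_{\de,z_j,a_{\de,j}}-a_{\de,j})_+^{p}$ together with integration by parts, I rewrite
\[
\de^2\int_\Om |DP_{\de,Z}|^2 =\sum_{j=1}^m \int_\Om \bigl(W_{\de,z_j,a_{\de,j}}-a_{\de,j}\bigr)_+^{p}\,P_{\de,Z}\,dx,
\]
and notice that the integrand on the right is supported in $B_{s_{\de,j}}(z_j)$. On this ball I apply the local expansion \eqref{2.9} of $P_{\de,Z}$ to write $P_{\de,Z}(x)=\kappa_j+\frac{2\pi q(x)}{|\ln\ep|}+(W_{\de,z_j,a_{\de,j}}(x)-a_{\de,j})+L_j(x-z_j)+O(s_{\de,j}^2/|\ln\ep|)$, where $L_j$ is linear. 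Because $(W_{\de,z_j,a_{\de,j}}-a_{\de,j})_+^{p}$ is radial around $z_j$, the contribution of $L_j$ vanishes, and expanding $q(x)=q(z_j)+O(|x-z_j|)$ leaves only an $O(s_{\de,j}/|\ln\ep|)$ error. Similarly, Lemma~\ref{al1} localizes the integrand of the $L^{p+1}$ term in $\Om_j$ to essentially $B_{s_{\de,j}}(z_j)$, where the same Taylor/symmetry argument gives
\[
\int_{\Om_j}\bigl(P_{\de,Z}-\kappa_j-\tfrac{2\pi q}{|\ln\ep|}\bigr)_+^{p+1} =\int \bigl(W_{\de,z_j,a_{\de,j}}-a_{\de,j}\bigr)_+^{p+1}+O\bigl(\tfrac{\de^4}{|\ln\ep|^{3}}\bigr).
\]

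Second, I compute the remaining local integrals explicitly. Using the formula \eqref{2.2} for $W_{\de,a}$, the Pohozaev identities $\int_{B_1}\phi^{p}=2\pi|\phi'(1)|$ and $\int_{B_1}\phi^{p+1}=\tfrac{\pi(p+1)}{2}|\phi'(1)|^2$, and the relation \eqref{2.6} (to eliminate $\phi'(1)$ in favor of $a_{\de,j}/\ln(R/s_{\de,j})$), one finds
\[
\int \bigl(W_{\de,z_j,a_{\de,j}}-a_{\de,j}\bigr)_+^{p}=\frac{2\pi\de^2 a_{\de,j}}{\ln(R/s_{\de,j})},\qquad \int \bigl(W_{\de,z_j,a_{\de,j}}-a_{\de,j}\bigr)_+^{p+1}=\frac{\pi(p+1)\de^2 a_{\de,j}^2}{2(\ln(R/s_{\de,j}))^2}.
\]
Substituting into $I(P_{\de,Z})=\tfrac12\de^2\int|DP_{\de,Z}|^2-\sum_j\tfrac{1}{p+1}\int\chi_{\Om_j}(\cdots)_+^{p+1}$ and simplifying the coefficient of $a_{\de,j}^2/(\ln(R/s_{\de,j}))^2$ via $\tfrac{p+1}{4}-\tfrac12=\tfrac{p-1}{4}$ gives
\[
I(P_{\de,Z})=\sum_{j=1}^m\!\left[\frac{\pi\de^2 a_{\de,j}\kappa_j}{\ln(R/s_{\de,j})} +\frac{2\pi^2\de^2 a_{\de,j}q(z_j)}{|\ln\ep|\ln(R/s_{\de,j})} +\frac{\pi(p-1)\de^2 a_{\de,j}^2}{4(\ln(R/s_{\de,j}))^2}\right]+O\!\left(\frac{\de^2\ln|\ln\ep|}{|\ln\ep|^3}\right).
\]

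Third, I insert the expansions \eqref{r2.2.1}--\eqref{r2.2.2} from Remark~\ref{remark2.2}. Writing $a_{\de,j}=\kappa_j+\tfrac{2\pi q(z_j)}{|\ln\ep|}+\tfrac{\kappa_j g(z_j,z_j)}{\ln(R/\ep)}-\sum_{i\ne j}\tfrac{\kappa_i\bar G(z_j,z_i)}{\ln(R/\ep)}+O(\tfrac{\ln|\ln\ep|}{|\ln\ep|^2})$ and replacing every $1/\ln(R/s_{\de,j})$ by $1/\ln(R/\ep)$ up to the same error, the leading piece $\pi\de^2 a_{\de,j}\kappa_j/\ln(R/s_{\de,j})$ produces the terms $\pi\de^2\kappa_j^2/\ln(R/\ep)$, $2\pi^2\de^2\kappa_j q(z_j)/(|\ln\ep|\ln(R/\ep))$, $\pi\de^2\kappa_j^2 g(z_j,z_j)/(\ln(R/\ep))^2$, and $-\pi\de^2\kappa_j\kappa_i\bar G(z_j,z_i)/(\ln(R/\ep))^2$; combined with the $q$ and $a_{\de,j}^2$ terms (using $a_{\de,j}=\kappa_j+O(1/|\ln\ep|)$ there since they already carry a $1/|\ln\ep|$ or $1/|\ln\ep|^2$ factor) this yields exactly the claimed expansion with $C=\pi\sum_j\kappa_j^2$.

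The main obstacle is bookkeeping the errors. The $O(\ln|\ln\ep|/|\ln\ep|^2)$ slip in Remark~\ref{remark2.2}, multiplied by the $O(\de^2/|\ln\ep|)$ prefactor in the leading kinetic term, is what produces the advertised $O(\de^2\ln|\ln\ep|/|\ln\ep|^3)$ remainder; one must also check that the boundary-layer region where $(W_j-a_{\de,j})_+$ vanishes but $(P_{\de,Z}-\kappa_j-\tfrac{2\pi q}{|\ln\ep|})_+$ is still positive, whose width is controlled by Lemma~\ref{al1} to be $O(s_{\de,j}^{1+\sigma})$, contributes a term of the same admissible order, and that the radial-symmetry cancellation of $L_j$ holds exactly as stated.
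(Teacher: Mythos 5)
Your proposal is correct and reproduces Proposition~\ref{ap1} by essentially the same mechanism as the paper (rewrite $\de^2\int_\Om|DP_{\de,Z}|^2$ via $-\de^2\Delta P_{\de,Z,j}=(W_{\de,z_j,a_{\de,j}}-a_{\de,j})_+^p$, evaluate the bubble integrals $\int(W-a)_+^p$ and $\int(W-a)_+^{p+1}$ by scaling plus the Pohozaev identities, then substitute Remark~\ref{remark2.2}). The bookkeeping differs in one interesting way: the paper expands $\sum_{i}\int(W_j-a_j)_+^p P_{\de,Z,i}$ piece by piece, treating the diagonal term (using $P_{\de,Z,j}=W_j-\frac{a_j}{\ln(R/s_{\de,j})}g(\cdot,z_j)$) and the off-diagonal cross-terms (using $P_{\de,Z,i}=\frac{a_i}{\ln(R/s_{\de,i})}\bar G(\cdot,z_i)$) separately, so that $g(z_i,z_i)$ and $\bar G(z_i,z_j)$ appear explicitly already in the pre-substitution formula. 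You instead plug the local expansion \eqref{2.9} straight into $\int(W_j-a_j)_+^p P_{\de,Z}$: since \eqref{2.9} already used \eqref{2.7} to absorb the $g$ and $\bar G$ contributions into $a_{\de,j}$, your intermediate formula has only $\kappa_j$, $q(z_j)$, and $a_{\de,j}$, and the $g$, $\bar G$ terms re-emerge only at the final substitution step. Both routes are algebraically equivalent (the two pre-substitution formulas are identified by \eqref{2.7}), and your version has the small advantage that the cross-term integrals never need to be computed explicitly and the radial-symmetry cancellation of the linear pieces is done once. Your identities $\int(W_j-a_j)_+^p=\frac{2\pi\de^2 a_{\de,j}}{\ln(R/s_{\de,j})}$ and $\int(W_j-a_j)_+^{p+1}=\frac{\pi(p+1)\de^2 a_{\de,j}^2}{2(\ln(R/s_{\de,j}))^2}$ are exactly the paper's, and the value $C=\pi\sum_j\kappa_j^2$ is consistent.

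One caveat on the error bookkeeping, which you flag but resolve too quickly. The $O(\ln|\ln\ep|/|\ln\ep|^2)$ discrepancy in $\frac{1}{\ln(R/s_{\de,j})}-\frac{1}{\ln(R/\ep)}$ is an \emph{absolute} error, so multiplying it by $\pi\de^2 a_{\de,j}\kappa_j=O(\de^2)$ would a priori give $O(\de^2\ln|\ln\ep|/|\ln\ep|^2)$, not $O(\de^2\ln|\ln\ep|/|\ln\ep|^3)$; the reasoning ``slip times prefactor'' as you phrase it does not by itself deliver the stated remainder. What saves the estimate (and the paper implicitly relies on the same fact) is that from \eqref{2.6} one in fact has $\ln(R/s_{\de,j})=\ln(R/\ep)+O(1)$ where the $O(1)$ is $\frac{p-1}{2}\ln\bigl(\tfrac{2\pi|\phi'(1)|}{\kappa_j}\bigr)+O(1/|\ln\ep|)$ — a $Z$-\emph{independent} constant — so the leading correction to $\frac{\pi\de^2\kappa_j^2}{\ln(R/s_{\de,j})}$ at order $\de^2/|\ln\ep|^2$ is a pure constant that can be absorbed into the $Z$-independent part of the expansion and plays no role in the critical-point argument of Section~4. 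This is a point worth making explicit, but since it affects the paper's own write-up in exactly the same way, it is not a gap in your argument relative to theirs.
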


\begin{proof}

Taking advantage of  \eqref{2.3}, we have

\[
\begin{split}
&\de^2\int_\Om \big|D P_{\de,Z}\big|^2= \sum_{j=1}^m \sum_{i=1}^m
\int_\Om \bigl(W_{\de,z_j,a_{\de,j} }-a_{\de,j}\bigr)_+^{p}
P_{\de,Z,i}\\
=& \sum_{j=1}^m \sum_{i=1}^m\int_{B_{s_{\de,j}}(z_j)}
\left(W_{\de,z_j,a_{\de,j}}-a_{\de,j}\right)_+^{p} \left(
W_{\de,z_i,a_{\de,i}}-\frac{a_{\de,i}}{\ln\frac R{s_{\de,i}}}
g(x,z_i)\right).
\end{split}
\]

First, we estimate

\[
\begin{split}
&\int_{B_{s_{\de,i}} (z_i)}
\left(W_{\de,z_i,a_{\de,i}}-a_{\de,i}\right)_+^{p}\left(
W_{\de,z_i,a_{\de,i}}-\frac{a_{\de,i}}{\ln\frac R{s_{\de,i}}}
g(x,z_i)
\right)\\
=&\int_{B_{s_{\de,i}} (z_i)}
\bigl(W_{\de,z_i,a_{\de,i}}-a_{\de,i}\bigr)^{p+1}+a_{\de,i}
\int_{B_{s_{\de,i}} (z_i)}
\bigl(W_{\de,z_i,a_{\de,i}}-a_{\de,i}\bigr)^{p}\\
&-\frac{a_{\de,i}}{\ln\frac R{s_{\de,i}}}\int_{B_{s_{\de,i}} (z_i)}
\bigl(W_{\de,z_i,a_{\de,i}}-a_{\de,i}\bigr)^{p}
g(x,z_i)\\
=&\Bigl(\frac{\de}{s_{\de,i}}\Bigr)^{\frac{2(p+1)}{p-1}}s_{\de,i}^2\int_{B_1(0)}\phi^{p+1}
+a_{\de,i}\Bigl(\frac{\de}{s_{\de,i}}\Bigr)^{\frac{2p}{p-1}}s_{\de,i}^2\int_{B_1(0)}\phi^{p}\\
&-\frac{a_{\de,i}}{\ln\frac
R{s_{\de,i}}}\Bigl(\frac{\de}{s_{\de,i}}\Bigr)^{\frac{2p}{p-1}}g(z_i,z_i)s_{\de,i}^2\int_{B_1(0)}\phi^{p}+O\left(
\frac{s_{\de,i}^3}{|\ln\ep|^{p+1}}\right)\\
=&\frac{\pi(p+1)}{2}\frac{\de^2a_{\de,i}^2}{(\ln\frac{R}{s_{\de,i}})^2}+\frac{2\pi
\de^2a_{\de,i}^2}{\ln\frac{R}{s_{\de,i}}}-\frac{2\pi\de^2
a_{\de,i}^2}{(\ln\frac{R}{s_{\de,i}})^2}g(z_i,z_i)+O\left(
\frac{\ep^3}{|\ln\ep|^{p+1}}\right).
\end{split}
\]

Next, for  $j\ne i$,

\[
\begin{split}
&\int_{B_{s_{\de,j}} (z_j)}
\bigl(W_{\de,z_j,a_{\de,j}}-a_{\de,j}\bigr)_+^{p} \left(
W_{\de,z_i,a_{\de,i}}-\frac{a_{\de,i}}{\ln\frac R{s_{\de,i}}}
g(x,z_i)
\right)\\
=&
\Bigl(\frac{\de}{s_{\de,j}}\Bigr)^{\frac{2p}{p-1}}\frac{a_{\de,i}}{\ln\frac{R}{s_{\de,i}}}
\int_{B_{s_{\de,j}}(z_j)}\phi^{p}\Bigl(\frac{|x-z_j|}{s_{\de,j}}\Bigr)\bar
G(x,z_i)
\\
=&
\Bigl(\frac{\de}{s_{\de,j}}\Bigr)^{\frac{2p}{p-1}}\frac{a_{\de,i}s_{\de,j}^2}{\ln\frac{R}{s_{\de,i}}}
\int_{B_{1}(0)}\phi^{p}(|x|)\bar G(z_j+s_{\de,j}x,z_i)
\\
=&\Bigl(\frac{\de}{s_{\de,j}}\Bigr)^{\frac{2p}{p-1}}\frac{a_{\de,i}s_{\de,j}^2}{\ln\frac{R}{s_{\de,i}}}
\bar G(z_j,z_i)
\int_{B_{1}(0)}\phi^{p}+O\left(\frac{s_{\de,j}^3}{|\ln\ep|^{p+1}}\right)\\
=&\frac{2\pi\de^2
a_{\de,i}a_{\de,j}}{|\ln\frac{R}{s_{\de,i}}||\ln\frac{R}{s_{\de,j}}|}\bar{G}(z_i,z_j)+O\left(
\frac{\ep^3}{|\ln\ep|^{p+1}}\right).
\end{split}
\]

By Lemma~\ref{al1} and \eqref{2.9},

\[
\begin{split}
&\sum_{j=1}^m\int_{\Om_j} \left( P_{\de,Z} -\kappa_j-\frac{2\pi
q(x)}{|\ln\ep|}\right)_+^{p+1}= \sum_{j=1}^m
\int_{B_{Ls_{\de,j}}(z_j)}\left( P_{\de,Z} -\kappa_j-\frac{2\pi
q(x)}{\kappa|\ln\ep|}\right)_+^{p+1}
\\
=& \sum_{j=1}^m \int_{B_{Ls_{\de,j}}(z_j)}
\left(W_{\de,z_j,a_{\de,j}} -a_{\de,j} +O\bigg(\frac
{s_{\de,j}}{|\ln\ep|}
\bigg)\right)_+^{p+1}\\
=&\sum_{j=1}^m\left(\frac{\de}{s_{\de,j}}\right)^{\frac{2(p+1)}{p-1}}\int_{B_{s_{\de,j}}(z_j)}\phi^{p+1}\Bigl(\frac{|x-z_j|}{s_{\de,j}}\Bigr)
+O\left(\frac{s_{\de,j}^3}{|\ln\ep|^{p+1}}\right)\\
=&\sum_{j=1}^m\left(\frac{\de}{s_{\de,j}}\right)^{\frac{2(p+1)}{p-1}}s_{\de,j}^2\int_{B_{1}(0)}\phi^{p+1}
+O\left(\frac{s_{\de,j}^3}{|\ln\ep|^{p+1}}\right)\\
=&\sum_{j=1}^m\frac{\pi(p+1)}{2}\frac{\de^2a_{\de,j}^2}{(\ln\frac{R}{s_{\de,j}})^2}+O\left(
\frac{\ep^3}{|\ln\ep|^{p+1}}\right).
\end{split}
\]
So, we have proved

\[
\begin{split}
I\left(\sum_{i=1}^m P_{\de,Z,j}\right)=&\sum_{i=1}^m\left(\frac{\pi
(p+1)}{4} \frac{ \de^2
 a_{\de,i}^2}{|\ln \frac{R}{s_{\de,i}}|^2
 } +\frac{\pi\de^2 a^2_{\de,i}}{|\ln \frac{R}{s_{\de,i}}|}-\frac{\pi g(z_i,z_i)\de^2a_{\de,i}^2}{|\ln\frac{R}{s_{\de,i}}|^2}\right)
 \\
 &+\sum_{j\ne i}^m \frac{\pi\bar G(z_j,z_i)\de^2 a_{\de,i}
 a_{\de,j}}{|\ln\frac{R}{s_{\de,i}}||\ln \frac{R}{s_{\de,j}}|}-\frac{\pi\de^2}{2}\left(\sum_{j=1}^m \frac{a_{\de,j}^2}{|\ln \frac{R}{s_{\de,j}}|^2}\right)
+ O\left(\frac{\ep^3}{|\ln\ep|^{p+1}} \right).
 \end{split}
\]

Thus, the result follows from Remark~\ref{remark2.2}.

\end{proof}

\begin{proposition}\label{ap2}

We have

\[
\begin{split}
\frac{\partial }{\partial z_{i,h}} I\left(
P_{\de,Z}\right)=&\frac{4\pi^2\de^2\kappa_i}{|\ln\ep||\ln\frac{R}{\ep}|}\frac{\partial
q(z_i)}{\partial z_{i,h}}
+\frac{2\pi\de^2\kappa_i^2}{(\ln\frac{R}{\ep})^2} \frac{\partial
g(z_i,z_i)}{\partial z_{i,h}}
 -\sum_{j\ne i}^m \frac{2\pi\de^2\kappa_i\kappa_j}{(\ln\frac{R}{\ep})^2}\frac{\partial \bar G(z_j,z_i)}{\partial z_{i,h}} \\
&+ O\left(\frac{\de^2\ln|\ln\ep|}{|\ln\ep|^3} \right).
 \end{split}
\]

\end{proposition}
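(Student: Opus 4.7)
The plan is to differentiate the energy directly and then follow exactly the strategy of Proposition~\ref{ap1}, but now with $\partial P_{\de,Z}/\partial z_{i,h}$ playing the role of a test function. I would start from
\[
\frac{\partial}{\partial z_{i,h}} I(P_{\de,Z})=\Big\langle I'(P_{\de,Z}),\frac{\partial P_{\de,Z}}{\partial z_{i,h}}\Big\rangle
\]
and use the defining equation $-\de^2\Delta P_{\de,Z,k}=(W_{\de,z_k,a_{\de,k}}-a_{\de,k})_+^p$ to rewrite this as the difference integral
\[
\sum_{k=1}^m\int_\Om(W_{\de,z_k,a_{\de,k}}-a_{\de,k})_+^p\frac{\partial P_{\de,Z}}{\partial z_{i,h}}-\sum_{j=1}^m\int_{\Om_j}\Big(P_{\de,Z}-\kappa_j-\frac{2\pi q}{|\ln\ep|}\Big)_+^p\frac{\partial P_{\de,Z}}{\partial z_{i,h}}.
\]
Lemma~\ref{al1} localises both summands to $\bigcup_j B_{Ls_{\de,j}}(z_j)$, and on each such ball formula \eqref{2.9} yields the Taylor expansion
\[
\Big(P_{\de,Z}-\kappa_j-\frac{2\pi q}{|\ln\ep|}\Big)_+^p=(W_{\de,z_j,a_{\de,j}}-a_{\de,j})_+^p+p(W_{\de,z_j,a_{\de,j}}-a_{\de,j})_+^{p-1}\Gamma_j(x)+\text{quadratic remainder},
\]
where $\Gamma_j(x)$ is the explicit linear function of $x-z_j$ whose coefficient vector involves $Dq(z_j)$, $Dg(z_j,z_j)$ and $\{D\bar G(z_j,z_k)\}_{k\ne j}$.

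Next I would argue that only the $j=i$ integral contributes to leading order. For $j\ne i$ the factor $\partial P_{\de,Z}/\partial z_{i,h}=\partial P_{\de,Z,i}/\partial z_{i,h}$ equals $O(1/|\ln\ep|)$ on $B_{Ls_{\de,j}}(z_j)$ (by the linear-in-$x$ regular-part behaviour of $W_{\de,z_i,a_{\de,i}}$ outside $B_{s_{\de,i}}(z_i)$, combined with \eqref{2.10}), and a direct size estimate shows the contribution is absorbed by the stated error $O(\de^2\ln|\ln\ep|/|\ln\ep|^3)$. The principal term is then
\[
-p\int_{B_{Ls_{\de,i}}(z_i)}(W_{\de,z_i,a_{\de,i}}-a_{\de,i})_+^{p-1}\Gamma_i(x)\,\frac{\partial P_{\de,Z,i}}{\partial z_{i,h}}\,dx.
\]
I would evaluate this by substituting formula \eqref{2.11} for $\partial_{z_{i,h}}W_{\de,z_i,a_{\de,i}}$ (the projection correction to $\partial P_{\de,Z,i}/\partial z_{i,h}$ contributes only to the error), rescaling $y=(x-z_i)/s_{\de,i}$, and exploiting radial symmetry via
\[
\int_{B_1}\phi^{p-1}(|y|)\phi'(|y|)\frac{y_ky_h}{|y|}\,dy=-\frac{2\pi|\phi'(1)|}{p}\delta_{kh},
\]
an identity which follows from $\int_0^1 r^2\phi^{p-1}\phi'\,dr=-\tfrac{2}{p}\int_0^1 r\phi^p\,dr$ via integration by parts together with the Pohozaev relation $\int_{B_1}\phi^p=2\pi|\phi'(1)|$. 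This extracts exactly the $h$-th coordinate of the coefficient vector of $\Gamma_i$.

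A short simplification using \eqref{2.6} (which converts the scaling prefactor into $\de^2 a_{\de,i}/(|\phi'(1)|\ln(R/s_{\de,i}))$) and Remark~\ref{remark2.2} (to replace $a_{\de,i}$ by $\kappa_i$ and $\ln(R/s_{\de,i})$ by $\ln(R/\ep)$ with error $O(\ln|\ln\ep|/|\ln\ep|^2)$), together with the symmetry identity $D^{(1)}\bar G(z_i,z_k)=\partial_{z_{i,h}}\bar G(z_k,z_i)$ (where $D^{(1)}$ denotes differentiation in the first slot, which coincides with the paper's notation $\partial/\partial z_{i,h}$ for $g(z_i,z_i)$ and $\bar G(z_j,z_i)$), produces exactly the three leading terms written in the statement.

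The main obstacle will be the careful bookkeeping of error terms, which have three sources of comparable size: the quadratic Taylor remainder in $\Gamma_j$, the pieces of $\partial P_{\de,Z,i}/\partial z_{i,h}$ beyond the leading $\partial W/\partial z_{i,h}$ (coming from the regular part of the projection and from the dependence of $a_{\de,i},s_{\de,i}$ on $Z$ via \eqref{2.10}), and the entire $j\ne i$ contribution. Each must be shown separately to fit within $O(\de^2\ln|\ln\ep|/|\ln\ep|^3)$; the rough scale of each is $\de^3|\ln\de|^{(p-5)/2}$, which is acceptable for any $p>1$. A minor but delicate point is to correctly recognise that the first-slot derivatives produced by $\Gamma_i$ are the same objects as the $\partial g(z_i,z_i)/\partial z_{i,h}$ and $\partial\bar G(z_j,z_i)/\partial z_{i,h}$ appearing in the statement, which is where the symmetry of $G$ and hence of $g$ and $\bar G$ enters.
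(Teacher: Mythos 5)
Your proposal follows essentially the same path as the paper's proof of Proposition~\ref{ap2}: rewrite $\partial_{z_{i,h}}I(P_{\de,Z})$ as the difference integral via the equations satisfied by $P_{\de,Z,j}$, localise to $\cup_j B_{Ls_{\de,j}}(z_j)$ with Lemma~\ref{al1}, Taylor-expand using \eqref{2.9}, isolate the diagonal $j=i$, $l=i$ contribution, substitute \eqref{2.11}, rescale to the radial identity $\int_{B_1}\phi^{p-1}\phi'\,y_h^2/|y|=-\tfrac{2\pi}{p}|\phi'(1)|$, and convert $a_{\de,i}$, $\ln(R/s_{\de,i})$ to $\kappa_i$, $\ln(R/\ep)$ by Remark~\ref{remark2.2}. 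The only genuine addition is that you derive the radial integral identity explicitly (via integration by parts and the Pohozaev relation) rather than quoting it; the main term, error sources, and bookkeeping are the same as the paper's.
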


\begin{proof}

Direct computation yields that

\[
\begin{split}
\frac{\partial }{\partial z_{i,h}} I\left( P_{\de,Z}\right)=&
\de^2\int_{\Om} DP_{\de,Z}D \frac{\partial P_{\de,Z}}{\partial
z_{i,h}}-\sum_{j=1}^m\int_{\Om_j} \left( P_{\de,Z}-\kappa_j
-\frac{2\pi q(x)}{|\ln\ep|}\right)_+^{p}\frac{\partial
P_{\de,Z}}{\partial z_{i,h}}
\\
=& \sum_{l=1}^m\sum_{j=1}^m \int_{B_{Ls_{\de,j}}(z_j)}
 \left[ \left(W_{\de,z_j,a_{\de,j}}-a_{\de,j}\right)_+^{p}-
 \left(
P_{\de,Z}-\kappa_j -\frac{2\pi
q(x)}{|\ln\ep|}\right)_+^{p}\right]\frac{\partial
P_{\de,Z,l}}{\partial z_{i,h}} .
 \end{split}
\]

Using \eqref{2.9}, Lemma~\ref{al1} and Remark~\ref{remark2.2}, we
find that

\[
\begin{split}
&\int_{B_{Ls_{\de,i}}(z_i)}
 \left[ \left(W_{\de,z_i,a_{\de,i}}-a_{\de,i}\right)_+^{p}-
 \left(
P_{\de,Z}-\kappa_i-\frac{2\pi q(x)}{|\ln\ep|} \right)_+^{p}\right]\frac{\partial P_{\de,Z,i}}{\partial z_{i,h}}\\
=&\int_{B_{s_{\de,i}(1+s_{\de,i}^{\sigma})}(z_i)}
  \left[ \bigl(W_{\de,z_i,a_{\de,i}}-a_{\de,i}\bigr)_+^{p}-
  \left(
P_{\de,Z}-\kappa_i-\frac{2\pi q(x)}{|\ln\ep|} \right)_+^{p}\right]\frac{\partial P_{\de,Z,i}}{\partial z_{i,h}}\\
=
&p\int_{B_{s_{\de,i}}(z_i)}\bigl(W_{\de,z_i,a_{\de,i}}-a_{\de,i}\bigr)_+^{p-1}\bigg[
 \frac{2\pi}{|\ln\ep|}\bigl\langle D q(z_i), x-z_i\bigr\rangle+\frac{a_{\de,i}}{\ln\frac{R}{s_{\de,i}}}\bigl\langle D g(z_i,z_i), x-z_i\bigr\rangle \\
\quad&- \sum_{j\ne i}^m
\frac{a_{\de,j}}{\ln\frac{R}{s_{\de,j}}}\bigl\langle D \bar
G(z_i,z_j),x-z_i\bigr\rangle \bigg]\frac{\partial
P_{\de,Z,i}}{\partial z_{i,h}}+O\Bigl(\frac{\ep^{2+\sigma}}{|\ln\ep|^{p+1}}\Bigr)\\
=&-\frac{p\de^2a_{\de,i}}{|\phi'(1)||\ln\frac{R}{s_{\de,i}}|}\bigg(\frac{2\pi}{|\ln\ep|}\frac{\partial
q(z_i)}{\partial
z_{i,h}}+\frac{a_{\de,i}}{\ln\frac{R}{s_{\de,i}}}\frac{\partial
g(z_i,z_i)}{\partial z_{i,h}}-\sum_{j\neq
i}^m\frac{a_{\de,j}}{\ln\frac{R}{s_{\de,j}}}\frac{\partial \bar
G(z_i,z_j)}{\partial z_{i,h}}\bigg)\\
 &\times \int_{B_1(0)}\phi^{p-1}(|x|)\phi^\prime(|x|)\frac{x_h^2}{|x|}
 +O\Bigl(\frac{\ep^{2+\sigma}}{|\ln\ep|^{p+1}}\Bigr)\\
 =&\frac{4\pi^2 \de^2a_{\de,i}}{|\ln\ep||\ln\frac{R}{s_{\de,i}}|}\frac{\partial q(z_i)}{\partial z_{i,h}}
 +\frac{2\pi \de^2a_{\de,i}^2}{(\ln\frac{R}{s_{\de,i}})^2}\frac{\partial g(z_i,z_i)}{\partial z_{i,h}}
 -\sum_{j\neq i}\frac{2\pi \de^2a_{\de,i}a_{\de,j}}{|\ln\frac{R}{s_{\de,j}}||\ln\frac{R}{s_{\de,i}}|}\frac{\partial \bar G(z_i,z_j)}{\partial z_{i,h}}
 +O\Bigl(\frac{\ep^{2+\sigma}}{|\ln\ep|^{p+1}}\Bigr),
\end{split}
\]
since

\[
\int_{B_1(0)}\phi^{p-1}(|x|)\phi^\prime(|x|)\frac{x_h^2}{|x|}=
-\frac{2\pi}{p}|\phi^\prime(1)|.
\]

On the other hand, for $l\ne i$, from \eqref{2.9}, we have

\[
\begin{split}
&\int_{B_{Ls_{\de,j}}(z_j)}
 \left[ \bigl(W_{\de,z_j,a_{\de,j}}-a_{\de,j}\bigr)_+^{p}-
 \left(
P_{\de,Z}-\kappa_j-\frac{2\pi q(x)}{|\ln\ep|} \right)_+^{p}\right]\frac{\partial P_{\de,Z,l}}{\partial z_{i,h}}\\
= &\int_{B_{Ls_{\de,j}}(z_j)}\left[\left(W_{\de,z_j,a_{\de,j}}-
a_{\de,j}\right)^{p-1}\frac{s_{\de,j}}{|\ln\ep|}\right]\times \frac{C}{\ln\frac{R}{s_{\de,l}}}\\
 =& O\left(\frac{\ep^3}{|\ln\ep|^{p+1}}
\right).
\end{split}
\]
Thus, the result follows.

\end{proof}

\end{document}